\newtheorem{theorem}{Theorem}
\newtheorem{lemma}{Lemma}
\newcommand{\email}[1]{\protect\href{mailto:#1}{#1}}
\title{\textbf{Small singular values can increase \\ in lower precision}}
\author{Christos Boutsikas\\ Purdue University \\ \email{cboutsik@purdue.edu} \and 
Petros Drineas \\ Purdue University \\ \email{pdrineas@purdue.edu} \and 
Ilse C.F. Ipsen \\ North Carolina State University \\ \email{ipsen@ncsu.edu} }
\date{}
\begin{document}

\maketitle

\begin{abstract}
We perturb a real matrix $A$ of full column rank, and derive 
lower bounds for the smallest singular values of the perturbed matrix,
in terms of normwise absolute perturbations. Our bounds, which extend existing lower-order expressions, demonstrate the potential increase in 
the smallest 
singular values, and represent a qualitative model
for the increase in the small singular values after a matrix has been
downcast to a lower arithmetic precision.
Numerical experiments confirm the qualitative validity of this model
and its ability to predict singular values changes in the presence of 
decreased arithmetic precision.
\end{abstract}

\section{Introduction}
Given a real, full column-rank matrix $\ma$, we present 
lower bounds for the  smallest singular values of a perturbed matrix $\ma+\me$.

\subsection{Motivation}
We investigate the change in the computed singular values of a tall and skinny 
matrix $\ma\in\rmn$ with $m\geq n$ and $\rank(\ma)=n$ when $\ma$ is demoted, that is,
downcast to a lower arithmetic precision.

We have observed that demotion to lower precision 
can improve the conditioning of
$\ma$ by significantly
increasing the computed small singular values, while leaving large singular values mostly unharmed. 
For instance, if  the smallest singular value of~$\ma$ is on the order of double precision
roundoff,
\begin{equation*}
\smin\left(\mathtt{double}(\ma)\right)\approx 10^{-16},
\end{equation*}
then downcasting $\ma$ to single precision can increase 
the smallest singular value to single precision roundoff,
\begin{equation*}
 \smin\left(\mathtt{double}(\mathtt{single}(\ma))\right)\approx 10^{-8}.
 \end{equation*}
 This phenomenon has been observed before, as the following quotes illustrate:
\begin{quote}
\textit{$\ldots$ small singular values tend to increase \cite[page 266]{SSun90}}\\
\textit{$\ldots$ even an approximate inverse of an arbitrarily ill-conditioned\\
 matrix does,
in general,  contain useful information \cite[page 260]{Rump09}}\\
\textit{This is due to a kind of regularization by rounding to working precision
\cite[page 261]{Rump09}}
\end{quote} \vspace*{0.2cm}

\subsection{Modelling demotion to lower precision in terms of perturbations}
We model the downcasting of a matrix to lower precision in terms of
normwise absolute perturbations.

The accumulated error from typical singular value algorithms in Matlab and Julia 
is a normwise absolute error \cite[section 8.6]{GovL13}.
Thus, if we downcast a matrix $\ma\in\rmn$ with $m\geq n$ to single precision
and compute the singular values of the demoted matrix,
the resulting error can be represented as an absolute perturbation~$\me$.
According to Weyl's inequality \cite[Corollary 8.6.2]{GovL13},
corresponding singular 
values\footnote{The singular values of a matrix $\ma\in\rmn$ are labelled in 
non-increasing order, by  $\sigma_1(\ma)\geq \sigma_2(\ma)\geq \ldots\geq 
\sigma_{\min\{m,n\}}(\ma)$.}
of $\ma$ change by at most $\|\me\|_2$,
\begin{align*}
|\sigma_j(\ma+\me)-\sigma_j(\ma)|\leq \|\me\|_2\approx 10^{-8},
\qquad 1\leq j\leq n.
\end{align*}
The bound implies that singular values larger than single precision roundoff,
i.e. $\sigma_j(\ma)\gg\|\me\|_2$, remain essentially the same,
\begin{equation*}
\sigma_j(\ma)\approx\sigma_j(\ma)-\underbrace{\|\me\|_2}_{10^{-8}}
\leq \sigma_j(\ma+\me)\leq \sigma_j(\ma)+\underbrace{\|\me\|_2}_{10^{-8}}
\approx \sigma_j(\ma),
\end{equation*}
while it is inconclusive about small singular values on the order of double precision
roundoff,
  \begin{equation*}
\underbrace{\sigma_{\ell}(\ma)}_{10^{-16}}-\underbrace{\|\me\|_2}_{10^{-8}}
\leq \sigma_{\ell}(\ma+\me)\leq  \underbrace{\sigma_{\ell}(\ma)}_{10^{-16}}+
\underbrace{\|\me\|_2}_{10^{-8}}.
\end{equation*}

\subsection{Our contributions}
Our main results are normwise absolute lower bounds on the smallest singular value cluster of a perturbed matrix. The bounds, compactly summarized in Theorem~\ref{t_i1} below, testify to a definitive increase in the perturbed small singular values. The qualitative validity of the bounds
is confirmed by the numerical experiments in section~\ref{s_exp}.
Our assumptions are not restrictive and merely
require the smallest singular value cluster to be separated by a small gap from
the remaining singular values.

Theorem~\ref{t_i1} improves the second-order perturbation expansions in
 \cite{Ste84,SSun90,SteM06}, because it is a  true lower bound
and, unlike \cite{Ste84}, \cite[Lemma V.4.5]{SSun90}, it needs no assumptions on 
the size of  the offdiagonal blocks.
Its proof (Sections~\ref{s_single} and~\ref{s_cluster}) follows from Theorem~\ref{t_det4} and a restatement of Assumptions~\ref{ass_2}. The special case where $r=1$ reduces to Assumptions~\ref{ass_1} and Theorem~\ref{t_det3}.

\begin{theorem}\label{t_i1}
Let $\ma\in\rmn$ with $m\geq n$ have $\rank(\ma)\geq n-r$ for some $r\geq 1$.
Let $\ma=\mU\msig\mv^T$ be a full singular value decomposition,
where $\msig\in\rmn$ is diagonal, and
$\mU\in\rmm$ and $\mv\in\rnn$ are orthogonal matrices. 
Partition commensurately,
\begin{align*}
\msig=\begin{bmatrix} \msig_1 & \vzero \\ \vzero & \msig_2\\ \vzero & \vzero \end{bmatrix},\qquad 
\mU^T\me\mv=\begin{bmatrix} \me_{11} & \me_{12} \\ \me_{21} & \me_{22}\\ \me_{31}& \me_{32} \end{bmatrix},
\end{align*}
where $\msig_1,\me_{11}\in\real^{(n-r)\times (n-r)}$ with $\msig_1$ nonsingular diagonal;
and $\msig_2,\me_{22}\in\real^{r\times r}$ with $\msig_2$ diagonal.

If $1/\|\msig_{1}^{-1}\|_2>4\|\me\|_2$ and $\|\msig_2\|_2<\|\me\|_2$,
then\footnote{The eigenvalues of a symmetric matrix $\mh\in\real^{k\times k}$ are labelled in non-increasing order, by $\lambda_1(\mh)\geq \cdots \geq\lambda_k(\mh)$.}
\begin{align*}
\sigma_{n-r+j}(\ma+\me)^2\geq\lambda_j(\me_{32}^T\me_{32}+(\msig_2+\me_{22})^T(\msig_2+\me_{22}) -\mr_3)-r_4,\quad 1\leq j\leq r,
\end{align*}
where $\mr_3$ contains terms of order 3,
\begin{align*}
\mr_3\equiv \me_{12}^T\mw+\mw^T\me_{12},\qquad 
\mw\equiv(\msig_1+\me_{11})^{-T}\begin{bmatrix}\me_{21}^T& \me_{31}^T\end{bmatrix}
\begin{bmatrix} \msig_2+\me_{22}\\ \me_{32}\end{bmatrix}
\end{align*}
and $r_4$ contains terms of order 4 and higher,
\begin{align*}
r_4\equiv \|\mw\|_2^2
+4\frac{\|\me\|_2^2\,\|(\msig_1+\me_{11})^{-1}(\me_{12}+\mw)\|_2^2}
{1-4\|\me\|_2^2 \|(\msig_1+\me_{11})^{-1}\|_2^2}.
\end{align*}
\end{theorem}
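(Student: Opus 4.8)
The plan is to pass to the Gram matrix $\mathbf B^T\mathbf B$, isolate the $r$ smallest eigenvalues as a spectrally separated cluster, and reduce to the Schur complement through a determinant/secular-equation argument. First, since $\mU$ and $\mv$ are orthogonal, $\sigma_j(\ma+\me)=\sigma_j(\mU^T(\ma+\me)\mv)=\sigma_j(\msig+\mU^T\me\mv)$. Write $\msig+\mU^T\me\mv=\begin{bmatrix}\mathbf B_1&\mathbf B_2\end{bmatrix}$ conformally with the column partition into $n-r$ and $r$ columns, so that $\mathbf B_1=\begin{bmatrix}\msig_1+\me_{11}\\ \me_{21}\\ \me_{31}\end{bmatrix}$, $\mathbf B_2=\begin{bmatrix}\me_{12}\\ \msig_2+\me_{22}\\ \me_{32}\end{bmatrix}$, and $\sigma_{n-r+j}(\ma+\me)^2=\lambda_{n-r+j}(\mathbf B^T\mathbf B)$, where $\mathbf B^T\mathbf B$ is the symmetric $2\times 2$ block matrix with diagonal blocks $\mathbf B_1^T\mathbf B_1$, $\mathbf B_2^T\mathbf B_2$ and $(1,2)$-block $\mathbf B_1^T\mathbf B_2$. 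From $1/\|\msig_1^{-1}\|_2>4\|\me\|_2$ we get $\sigma_{\min}(\mathbf B_1)\ge\sigma_{\min}(\msig_1+\me_{11})\ge 1/\|\msig_1^{-1}\|_2-\|\me\|_2>3\|\me\|_2$, and from $\|\msig_2\|_2<\|\me\|_2$ we get $\|\mathbf B_2\|_2\le\|\me\|_2+\|\msig_2\|_2<2\|\me\|_2$. Evaluating the Courant--Fischer characterizations of $\lambda_{n-r}$ and $\lambda_{n-r+1}$ on the two coordinate subspaces of $\real^n$ gives $\lambda_{n-r}(\mathbf B^T\mathbf B)\ge\sigma_{\min}(\mathbf B_1)^2>9\|\me\|_2^2$ and $\lambda_{n-r+j}(\mathbf B^T\mathbf B)\le\lambda_{n-r+1}(\mathbf B^T\mathbf B)\le\|\mathbf B_2\|_2^2<4\|\me\|_2^2$ for $1\le j\le r$; thus the $r$ eigenvalues of interest form a cluster in $[0,4\|\me\|_2^2)$ that is separated from the remaining ones.

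Next, for $0\le\lambda<\lambda_{\min}(\mathbf B_1^T\mathbf B_1)$ the Schur determinant identity gives $\det(\mathbf B^T\mathbf B-\lambda\mathbf I)=\det(\mathbf B_1^T\mathbf B_1-\lambda\mathbf I)\det(\mathbf S(\lambda)-\lambda\mathbf I)$ with
\[
\mathbf S(\lambda):=\mathbf B_2^T\mathbf B_2-\mathbf B_2^T\mathbf B_1(\mathbf B_1^T\mathbf B_1-\lambda\mathbf I)^{-1}\mathbf B_1^T\mathbf B_2 .
\]
Since $(\mathbf B_1^T\mathbf B_1-\lambda\mathbf I)^{-1}$ is increasing in $\lambda$, $\mathbf S(\lambda)$ is decreasing in the Loewner order, with $\mathbf S(\lambda)\preceq\mathbf S(0)=:\mathbf S\preceq\mathbf B_2^T\mathbf B_2$; in particular each $\lambda\mapsto\lambda_j(\mathbf S(\lambda))$ is nonincreasing. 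By the previous paragraph every one of the $r$ sought eigenvalues lies in the admissible interval and hence is a root of $\det(\mathbf S(\lambda)-\lambda\mathbf I)$. The function $\lambda\mapsto\lambda_j(\mathbf S(\lambda))-\lambda$ is continuous and strictly decreasing, is $\ge 0$ at $\lambda=0$ (because $\mathbf S=\mathbf B_2^T\mathbf B_2-\mathbf B_2^T\mathbf B_1(\mathbf B_1^T\mathbf B_1)^{-1}\mathbf B_1^T\mathbf B_2\succeq\vzero$), and is $<0$ for $\lambda\ge\|\mathbf B_2\|_2^2$; so it has a unique zero $\lambda_j^\ast\in[0,4\|\me\|_2^2)$, and the decreasing ordering $\lambda_1(\mathbf S(\lambda))\ge\cdots\ge\lambda_r(\mathbf S(\lambda))$ together with the eigenvalue count of the previous paragraph forces $\lambda_j^\ast=\lambda_{n-r+j}(\mathbf B^T\mathbf B)$. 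Consequently $\sigma_{n-r+j}(\ma+\me)^2=\lambda_j\big(\mathbf S(\lambda_j^\ast)\big)\ge\lambda_j(\mathbf S)-\|\mathbf S-\mathbf S(\lambda_j^\ast)\|_2$ by Weyl's inequality.

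It remains to unwind $\mathbf S$ and $\mathbf S-\mathbf S(\lambda_j^\ast)$. Put $\mathbf G:=\msig_1+\me_{11}$ and $\mathbf F:=\me_{21}^T\me_{21}+\me_{31}^T\me_{31}\succeq\vzero$; the definition of $\mw$ immediately yields $\mathbf B_1^T\mathbf B_2=\mathbf G^T(\me_{12}+\mw)$ and $\mathbf B_1^T\mathbf B_1=\mathbf G^T\mathbf G+\mathbf F$. Writing $\mathbf G(\mathbf G^T\mathbf G+\mathbf F)^{-1}\mathbf G^T=\mathbf I-\mathbf N$ with $\vzero\preceq\mathbf N\preceq\mathbf I$ and expanding $\mathbf B_2^T\mathbf B_2-(\me_{12}+\mw)^T(\me_{12}+\mw)$, one obtains, with $\mathbf T:=\me_{32}^T\me_{32}+(\msig_2+\me_{22})^T(\msig_2+\me_{22})-\mr_3$,
\[
\mathbf S=\mathbf T-\mw^T\mw+(\me_{12}+\mw)^T\mathbf N(\me_{12}+\mw)\ \succeq\ \mathbf T-\mw^T\mw ,
\]
so Weyl's monotonicity theorem gives $\lambda_j(\mathbf S)\ge\lambda_j(\mathbf T)-\|\mw\|_2^2$. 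For the remainder, $\mathbf S-\mathbf S(\lambda)=\lambda\,(\me_{12}+\mw)^T\mathbf G(\mathbf B_1^T\mathbf B_1-\lambda\mathbf I)^{-1}(\mathbf B_1^T\mathbf B_1)^{-1}\mathbf G^T(\me_{12}+\mw)$; using $\mathbf B_1^T\mathbf B_1\succeq\mathbf G^T\mathbf G$ to rewrite the kernel in terms of $\mathbf G^{-1}(\me_{12}+\mw)$, together with $\lambda=\lambda_j^\ast<\|\mathbf B_2\|_2^2\le 4\|\me\|_2^2$ and $\lambda_{\min}(\mathbf B_1^T\mathbf B_1)\ge\sigma_{\min}(\mathbf G)^2=1/\|\mathbf G^{-1}\|_2^2$, gives
\[
\|\mathbf S-\mathbf S(\lambda_j^\ast)\|_2\le 4\,\frac{\|\me\|_2^2\,\|(\msig_1+\me_{11})^{-1}(\me_{12}+\mw)\|_2^2}{1-4\|\me\|_2^2\,\|(\msig_1+\me_{11})^{-1}\|_2^2}=r_4-\|\mw\|_2^2 .
\]
Combining the three displayed estimates yields $\sigma_{n-r+j}(\ma+\me)^2\ge\lambda_j(\mathbf T)-\|\mw\|_2^2-(r_4-\|\mw\|_2^2)=\lambda_j(\mathbf T)-r_4$, which is the claim. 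When $r=1$ the quantities $\mathbf S(\lambda),\mathbf S,\mathbf T$ are scalars, the root equation above is an ordinary secular equation, and one recovers the $r=1$ statement (Theorem~\ref{t_det3}); the general case is Theorem~\ref{t_det4}.

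The step I expect to be the main obstacle is the final bound on $\|\mathbf S-\mathbf S(\lambda_j^\ast)\|_2$: one must extract the factor $\|(\msig_1+\me_{11})^{-1}(\me_{12}+\mw)\|_2^2$ from $\mathbf G(\mathbf B_1^T\mathbf B_1-\lambda\mathbf I)^{-1}(\mathbf B_1^T\mathbf B_1)^{-1}\mathbf G^T$ in spite of the positive semidefinite tail $\mathbf F=\me_{21}^T\me_{21}+\me_{31}^T\me_{31}$, and then absorb the resulting higher-order corrections, and the slack in $\lambda_j^\ast<\|\mathbf B_2\|_2^2$, into the denominator $1-4\|\me\|_2^2\|(\msig_1+\me_{11})^{-1}\|_2^2$. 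A secondary point is making the root count in the second paragraph rigorous, which is exactly where the separation hypotheses $1/\|\msig_1^{-1}\|_2>4\|\me\|_2$ and $\|\msig_2\|_2<\|\me\|_2$ enter. Everything else reduces to orthogonal invariance, the two identities $\mathbf B_1^T\mathbf B_2=\mathbf G^T(\me_{12}+\mw)$ and $\mathbf B_1^T\mathbf B_1=\mathbf G^T\mathbf G+\mathbf F$, and two applications of Weyl's inequality.
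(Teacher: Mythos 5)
Your overall architecture matches the paper's: pass to the Gram matrix, exploit the spectral separation of the bottom $r$ eigenvalues, and control them via a Schur-complement/secular-equation argument. The leading-term estimate is also correct, and your projector observation
\[
\mathbf B_2^T\mathbf B_2-\mathbf B_2^T\mathbf B_1(\mathbf B_1^T\mathbf B_1)^{-1}\mathbf B_1^T\mathbf B_2
=\me_{32}^T\me_{32}+(\msig_2+\me_{22})^T(\msig_2+\me_{22})-\mr_3-\mw^T\mw
+(\me_{12}+\mw)^T\mathbf N(\me_{12}+\mw),
\qquad \vzero\preceq\mathbf N\preceq\mi,
\]
is a nice way to recover $\lambda_j(\mathbf T)-\|\mw\|_2^2$.

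However, there is a genuine gap at the step you yourself flag as ``the main obstacle.'' You form the Schur complement of the \emph{full} block $\mb_{11}=\mathbf B_1^T\mathbf B_1=(\msig_1+\me_{11})^T(\msig_1+\me_{11})+\me_{21}^T\me_{21}+\me_{31}^T\me_{31}$ and only afterwards try to extract the factor $\|(\msig_1+\me_{11})^{-1}(\me_{12}+\mw)\|_2^2$ from
$\mathbf S-\mathbf S(\lambda)=\lambda\,\mathbf B_2^T\mathbf B_1(\mathbf B_1^T\mathbf B_1-\lambda\mi)^{-1}(\mathbf B_1^T\mathbf B_1)^{-1}\mathbf B_1^T\mathbf B_2$.
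Write $\mg=\msig_1+\me_{11}$, $\mathbf F=\me_{21}^T\me_{21}+\me_{31}^T\me_{31}$, $\mathbf P=\mg^T\mg+\mathbf F$. Then $(\mathbf P-\lambda\mi)^{-1}\mathbf P^{-1}\mathbf B_1^T\mathbf B_2=(\mathbf P-\lambda\mi)^{-1}\mathbf P^{-1}\mg^T(\me_{12}+\mw)$, and because $\mathbf P^{-1}$ does \emph{not} factor as $\mg^{-1}(\cdot)\mg^{-T}$ when $\mathbf F\neq\vzero$, the $\mg^{-1}$ cannot be transferred onto $(\me_{12}+\mw)$. Moreover $\mathbf P\succeq\mg^T\mg$ does not imply $(\mathbf P-\lambda\mi)^{-1}\mathbf P^{-1}\preceq(\mg^T\mg-\lambda\mi)^{-1}(\mg^T\mg)^{-1}$, because the scalar map $x\mapsto 1/(x(x-\lambda))$ is not operator-monotone (the two resolvents are individually Loewner-monotone, but their product is not). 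What does follow cleanly along your route --- using $\mg\mathbf P^{-1}\mg^T\preceq\mi$ and $\lambda_{\min}(\mathbf P)\geq\smin(\mg)^2$ --- is the weaker estimate
\[
\|\mathbf S-\mathbf S(\lambda)\|_2\ \leq\ \frac{\lambda\,\|\me_{12}+\mw\|_2^2\,\|(\msig_1+\me_{11})^{-1}\|_2^2}{1-\lambda\,\|(\msig_1+\me_{11})^{-1}\|_2^2},
\]
with $\|\me_{12}+\mw\|_2\,\|(\msig_1+\me_{11})^{-1}\|_2$ in place of $\|(\msig_1+\me_{11})^{-1}(\me_{12}+\mw)\|_2$ --- which yields a larger $r_4$ than the theorem states and hence a strictly weaker conclusion.

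The paper sidesteps this by applying the Loewner step at the level of the block matrix \emph{before} forming any Schur complement: since $\mc_{12}=\mathbf F\succeq\vzero$, Weyl monotonicity gives $\lambda_{n-r+j}(\mb)\geq\lambda_{n-r+j}(\widehat{\mb})$ where $\widehat{\mb}$ replaces $\mb_{11}$ by $\mc_{11}=\mg^T\mg$ while keeping $\mb_{12},\mb_{22}$ intact (Lemma~\ref{l_dp2a}). The Schur complement of $\widehat{\mb}$ then involves $(\mc_{11}-\lambda\mi)^{-1}$ rather than $(\mb_{11}-\lambda\mi)^{-1}$, and $\mc_{11}^{-1}\mb_{12}=\mg^{-1}\mg^{-T}\mg^T(\me_{12}+\mw)=\mg^{-1}(\me_{12}+\mw)$ factors exactly, producing the $\|(\msig_1+\me_{11})^{-1}(\me_{12}+\mw)\|_2^2$ in the numerator of $r_4$ (Theorem~\ref{t_dp7}, then Theorem~\ref{t_det4}). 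In short: your route proves a lower bound of the same shape but with a looser fourth-order remainder; to get the stated $r_4$ you need to move the ``dominant part'' replacement $\mb_{11}\to\mc_{11}$ upstream of the Schur complement, exactly as in Lemma~\ref{l_dp2a} and Theorem~\ref{t_dp7}.
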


\noindent Future work, sketched in section~\ref{s_future}, will refine the
above results towards a quantitative analysis that predicts the order of
magnitude of the increase, and the influential matrix properties,
in particular, the role of the singular value gap. We also note that our theorem does not directly account for computational precision. However, our experiments in Section~\ref{s_exp} demonstrate that when performing calculations in double precision, the “exact” singular values exhibit complete overlap with those computed in double precision. This suggests that perturbations arising from computational precision should not affect our results.

\subsection{Existing work}\label{s_lit}
There are many bounds for the smallest singular value of
general, unstructured matrices. The bounds for nonsingular matrices
$\ma\in\cnn$ in \cite{LinXie21,Shun22,Zou12} involve the factor 
$|\det(\ma)|^2\left(\frac{n-1}{\|\ma\|_F^2}\right)^{n-1}$, while
the ones in \cite{HongPan92,YuGu97} contain factors
like $|\det(\ma)|^2\left(\frac{n-1}{n}\right)^{(n-1)/2}$ and row and column norms.
The Schur complement-based bounds for strictly diagonally dominant matrices
in \cite{Huang08,Li20,Oishi23,Sang21,Varah75}
  depend on the degree of
 diagonal dominance, as do the
Gerschgorin type bounds for rectangular matrices in
\cite{Johnson89,JohnsonSzulc98}.

In contrast, we are bounding the smallest singular values of  
\textit{perturbed} matrices. The expressions for small singular values in
 \cite[Theorem]{Ste84},  \cite[Theorem 8]{SteM06},  \cite[Section V.4.2]{SSun90}
are second-order perturbation expansions rather than bounds, and 
require assumptions on the singular vectors. 

\subsection{Overview}
Our deterministic lower bounds for small singular values
of $\ma+\me$ are based on eigenvalue bounds
for $(\ma+\me)^T(\ma+\me)$.
We present normwise absolute bounds for a single smallest singular value (section~\ref{s_single}) and
for a cluster of small singular values (section~\ref{s_cluster}).
The numerical experiments (section~\ref{s_exp}) confirm
the qualitative increase in small singular values resulting from the demotion 
of the matrix to lower precision.
A brief discussion of future work (section~\ref{s_future}) concludes the paper.

\section{A single smallest singular value}\label{s_single}
We perturb a matrix that has a single smallest singular value,
 and derive a lower bound for the smallest singular value of the perturbed matrix
 in terms of normwise absolute perturbations
 (Section~\ref{s_slb}),  based on eigenvalue bounds (Section~\ref{s_sev}).

\subsection{Auxiliary eigenvalue results}\label{s_sev}
We square the singular values of $\ma\in\rmn$ and consider instead the eigenvalues 
of the symmetric positive semi-definite matrix $\mb\equiv \ma^T\ma\in\rnn$.

For a symmetric positive semi-definite matrix $\mb\in\rnn$ 
with a single smallest eigenvalue $\lmin(\mb)$, we present two expressions for $\lmin(\mb)$
with different assumptions
(Lemmas \ref{l_dp1} and~\ref{r_dp1}), and two lower bounds in terms
of normwise absolute perturbations (Theorems \ref{t_dp4} 
and~\ref{t_dp5}).

We assume that $\lmin(\mb)$ is separated 
from the remaining eigenvalues, in the sense that it is strictly smaller than
the smallest eigenvalue of the leading principal submatrix $\mb_{11}$
of order $n-1$. The equality  below expresses $\lmin(\mb)$ in terms of itself.

\begin{lemma}[Exact expression]\label{l_dp1}
Let $\mb\in\rnn$ be symmetric positive semi-definite with $\rank(\mb)\geq n-1$,
and partition
\begin{align*}
\mb=\begin{bmatrix}\mb_{11} & \vb \\ \vb^T & \beta\end{bmatrix} \qquad \text{where}\quad  \mb_{11}\in\real^{(n-1)\times (n-1)}.
\end{align*}
Then
\begin{align}\label{e_dp1}
0\leq\lmin(\mb)\leq \beta.
\end{align}
If also  $\lmin(\mb)<\lmin(\mb_{11})$ then 
\begin{align*}
\lmin(\mb)=\beta-\vb^T(\mb_{11}-\lmin(\mb)\,\mi)^{-1}\vb,
\end{align*}
\end{lemma}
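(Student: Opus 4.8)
The bound \eqref{e_dp1} is immediate: positive semi-definiteness gives $\lmin(\mb)\geq 0$, while $\beta=\ve_n^T\mb\,\ve_n$ is a Rayleigh quotient of $\mb$ (with $\ve_n$ the last canonical basis vector), so the min–max characterization of eigenvalues forces $\lmin(\mb)\leq\beta$. No assumption beyond symmetry and positive semi-definiteness is needed here.

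For the fixed-point identity, abbreviate $\lambda\equiv\lmin(\mb)$. The plan is to exploit that the separation hypothesis $\lambda<\lmin(\mb_{11})$ makes $\mb_{11}-\lambda\mi$ symmetric positive definite, hence nonsingular. I would then use the Schur-complement factorization of the determinant: for any scalar $\mu$ with $\mb_{11}-\mu\mi$ nonsingular,
\begin{align*}
\det(\mb-\mu\mi)=\det(\mb_{11}-\mu\mi)\,\bigl(\beta-\mu-\vb^T(\mb_{11}-\mu\mi)^{-1}\vb\bigr).
\end{align*}
Taking $\mu=\lambda$: the left side vanishes because $\lambda$ is an eigenvalue of $\mb$, and the first factor on the right is nonzero by the separation hypothesis, so the scalar Schur-complement factor must vanish, which is exactly $\lambda=\beta-\vb^T(\mb_{11}-\lambda\mi)^{-1}\vb$.

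An equivalent, more elementary route avoids determinants: take a unit eigenvector $\vx=\begin{bmatrix}\vx_1\\ \xi\end{bmatrix}$ of $\mb$ for $\lambda$, read off the two block rows $(\mb_{11}-\lambda\mi)\vx_1=-\xi\,\vb$ and $\vb^T\vx_1+(\beta-\lambda)\xi=0$, observe that $\xi\neq0$ (otherwise $(\mb_{11}-\lambda\mi)\vx_1=\vzero$ with $\vx_1\neq\vzero$ contradicts nonsingularity), solve the first equation as $\vx_1=-\xi(\mb_{11}-\lambda\mi)^{-1}\vb$, substitute into the second, and divide by $\xi$. The only place where care is required in either version is the invertibility of $\mb_{11}-\lambda\mi$ and, in the eigenvector version, the nonvanishing of the trailing component $\xi$; both follow cleanly from $\lambda<\lmin(\mb_{11})$, so I do not anticipate a genuine obstacle—the lemma is essentially a packaging of the Schur complement together with the separation assumption.
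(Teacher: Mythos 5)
Your proposal is correct and matches the paper's approach: the paper derives the same scalar Schur-complement condition by forming the block LU decomposition of $\mb-\lmin(\mb)\mi$, using nonsingularity of $\mb_{11}-\lmin(\mb)\mi$ to conclude that the $(2,2)$ entry of the block upper-triangular factor must vanish, which is precisely your determinant factorization argument in disguise. Your alternative eigenvector route (showing the trailing component $\xi$ is nonzero under the separation hypothesis, then eliminating $\vx_1$) is equally valid and somewhat more elementary, but both ultimately package the same Schur-complement observation.
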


\begin{proof}
Abbreviate $\tlmin\equiv\lmin(\mb)$. 
 The positive semi-definiteness of $\mb$ implies the lower bound in (\ref{e_dp1}),
 while the variational inequalities imply the upper bound, 
\begin{align*}
0\leq \tlmin=\min_{\|\vx\|_2=1}{\vx^T\mb\vx}\leq \ve_n^T\mb\ve_n=\beta.
\end{align*}
To show the expression for $\tlmin$, observe that the shifted matrix
\begin{align*}
\mb-\tlmin \mi=\begin{bmatrix} \mb_{11}-\tlmin \mi& \vb\\ \vb^T & \beta-\tlmin
\end{bmatrix} 
\end{align*}
is singular.
From  the assumption $\tlmin<\lmin(\mb_{11})$ follows that $\mb_{11}-\tlmin\mi$ is nonsingular.
So we can determine the block LU decomposition
$\mb-\tlmin\mi=\ml\widehat{\mU}$ with
\begin{align*}
\ml&\equiv\begin{bmatrix}\mi & \vzero\\
\vb^T(\mb_{11}-\tlmin\mi)^{-1} & 1\end{bmatrix}, \\
\widehat{\mU}&\equiv \begin{bmatrix}\mb_{11}-\tlmin \mi& \vb\\
\vzero & \beta-\tlmin -\vb^T(\mb_{11}-\tlmin\mi)^{-1}\vb\end{bmatrix}.
\end{align*}
Since $\mb-\tlmin \mi$ is singular and the unit triangular matrix $\ml$ is 
nonsingular, the block upper triangular matrix $\widehat{\mU}$ has no choice but to be singular.
Its leading principal submatrix $\mb_{11}-\tlmin \mi$ is nonsingular by assumption, which
leaves the (2,2) element to be singular, but it being a scalar implies
\begin{align*}
 \beta-\tlmin -\vb^T(\mb_{11}-\tlmin\mi)^{-1}\vb=0.
 \end{align*}
This gives the expression for $\tlmin$.
 \end{proof}

If $\vb=\vzero$ then Lemma~\ref{l_dp1} correctly asserts that 
$\lmin(\mb)=\beta$. 

Lemma~\ref{r_dp1} below presents the same
expression for $\lmin(\mb)$ as in Lemma~\ref{l_dp1}, but under a stronger albeit more useful
assumption.

\begin{lemma}[Exact expression with stronger assumption]\label{r_dp1}
Let $\mb\in\rnn$ be symmetric positive semi-definite with $\rank(\mb)\geq n-1$, and partition
\begin{align*}
\mb=\begin{bmatrix}\mb_{11} & \vb \\ \vb^T & \beta\end{bmatrix} \qquad \text{where}\quad  \mb_{11}\in\real^{(n-1)\times (n-1)}.
\end{align*}
If  $\beta<\lmin(\mb_{11})$ then 
\begin{align*}
\lmin(\mb)=\beta-\vb^T(\mb_{11}-\lmin(\mb)\,\mi)^{-1}\vb\geq 0.
\end{align*}
\end{lemma}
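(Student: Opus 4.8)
The plan is to deduce Lemma~\ref{r_dp1} from Lemma~\ref{l_dp1} by verifying that the hypothesis $\beta < \lmin(\mb_{11})$ implies the hypothesis $\lmin(\mb) < \lmin(\mb_{11})$ of Lemma~\ref{l_dp1}, after which the exact expression (and the nonnegativity, which is the lower bound in~\eqref{e_dp1}) follows verbatim. So the only thing that requires an argument is the chain $\lmin(\mb) \leq \beta < \lmin(\mb_{11})$.

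The first inequality, $\lmin(\mb) \leq \beta$, is already recorded as the upper bound in~\eqref{e_dp1} of Lemma~\ref{l_dp1}: it comes from the variational characterization, $\lmin(\mb) = \min_{\|\vx\|_2=1} \vx^T\mb\vx \leq \ve_n^T\mb\ve_n = \beta$, and it needs no separation assumption. Combining this with the standing hypothesis $\beta < \lmin(\mb_{11})$ gives $\lmin(\mb) < \lmin(\mb_{11})$, which is exactly the extra assumption needed to invoke the second part of Lemma~\ref{l_dp1}. Applying that lemma then yields $\lmin(\mb) = \beta - \vb^T(\mb_{11} - \lmin(\mb)\mi)^{-1}\vb$, and since $\mb$ is positive semi-definite we also have $\lmin(\mb) \geq 0$, which finishes the statement.

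There is essentially no obstacle here: the lemma is a convenience restatement that trades the implicit, self-referential hypothesis $\lmin(\mb) < \lmin(\mb_{11})$ of Lemma~\ref{l_dp1} for the directly checkable hypothesis $\beta < \lmin(\mb_{11})$, and the bridge between the two is the one-line variational bound $\lmin(\mb) \leq \beta$. The only point worth stating carefully is why this hypothesis is genuinely stronger and hence "more useful" in applications (one does not know $\lmin(\mb)$ in advance, but one can often bound $\beta$ and $\lmin(\mb_{11})$), though that is commentary rather than part of the proof. If one wanted a fully self-contained proof one could also redo the block-LU argument of Lemma~\ref{l_dp1} directly under the new hypothesis, but citing Lemma~\ref{l_dp1} is cleaner and I would do that.
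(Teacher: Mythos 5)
Your proposal is correct and takes exactly the same route as the paper: combine the unconditional upper bound $\lmin(\mb)\leq\beta$ from \eqref{e_dp1} with the hypothesis $\beta<\lmin(\mb_{11})$ to obtain the chain $0\leq\lmin(\mb)\leq\beta<\lmin(\mb_{11})$, which activates the second part of Lemma~\ref{l_dp1}. Nothing to add.
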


\begin{proof}
The upper bound (\ref{e_dp1}) combined with  the assumption   $\beta<\lmin(\mb_{11})$ 
implies the assumption in Lemma~\ref{l_dp1},
\begin{align}\label{e_r21}
0\leq \lmin(\mb)\leq \beta<\lmin(\mb_{11}).
\end{align}
\end{proof}

The subsequent lower bounds for $\lmin(\mb)$ are informative if the offdiagonal
part has small norm.

\begin{theorem}[First lower bound]\label{t_dp4}
Let $\mb\in\rnn$ be symmetric positive semi-definite with $\rank(\mb)\geq n-1$, and partition
\begin{align*}
\mb=\begin{bmatrix}\mb_{11} & \vb \\ \vb^T & \beta\end{bmatrix} \qquad \text{where}\quad  \mb_{11}\in\real^{(n-1)\times (n-1)}.
\end{align*}
If  $\beta<\lmin(\mb_{11})$ then 
\begin{align*}
\lmin(\mb)\geq \beta-\vb^T\mb_{11}^{-1}\vb-\frac{\beta\,\|\mb_{11}^{-1}\vb\|_2^2}{1-\beta\,\|\mb_{11}^{-1}\|_2}.
\end{align*}
\end{theorem}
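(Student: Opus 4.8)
The plan is to bootstrap from the exact fixed-point identity furnished by Lemma~\ref{r_dp1}. Abbreviating $\tlmin\equiv\lmin(\mb)$, the hypothesis $\beta<\lmin(\mb_{11})$ gives simultaneously the sandwich $0\le\tlmin\le\beta$ and the equation
\[
\tlmin=\beta-\vb^T(\mb_{11}-\tlmin\mi)^{-1}\vb .
\]
The idea is to trade the shifted inverse $(\mb_{11}-\tlmin\mi)^{-1}$ for the unshifted $\mb_{11}^{-1}$, which produces the term $-\vb^T\mb_{11}^{-1}\vb$ in the claimed bound, and then to control the remainder. Applying the resolvent identity $(\mb_{11}-\tlmin\mi)^{-1}=\mb_{11}^{-1}+\tlmin\,(\mb_{11}-\tlmin\mi)^{-1}\mb_{11}^{-1}$ yields
\[
\tlmin=\beta-\vb^T\mb_{11}^{-1}\vb-\tlmin\,\vb^T(\mb_{11}-\tlmin\mi)^{-1}\mb_{11}^{-1}\vb ,
\]
so everything reduces to an upper bound on the last, cubic term.

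Next I would factor out $\mb_{11}^{-1}$ on both sides of the remainder. Since $\mb_{11}$ is a principal submatrix of the positive semi-definite $\mb$ and $\lmin(\mb_{11})>\beta\ge0$, it is symmetric positive definite, and $\mb_{11}-\tlmin\mi=\mb_{11}(\mi-\tlmin\mb_{11}^{-1})$ with commuting factors; hence the cubic term equals $(\mb_{11}^{-1}\vb)^T(\mi-\tlmin\mb_{11}^{-1})^{-1}(\mb_{11}^{-1}\vb)$. Because $0\le\tlmin\le\beta$ and $\beta\,\|\mb_{11}^{-1}\|_2=\beta/\lmin(\mb_{11})<1$, the matrix $\mi-\tlmin\mb_{11}^{-1}$ is symmetric positive definite with $\|(\mi-\tlmin\mb_{11}^{-1})^{-1}\|_2\le(1-\tlmin\,\|\mb_{11}^{-1}\|_2)^{-1}\le(1-\beta\,\|\mb_{11}^{-1}\|_2)^{-1}$, by a Neumann-series estimate. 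Consequently the cubic term is \emph{nonnegative} and bounded above by $\|\mb_{11}^{-1}\vb\|_2^2/(1-\beta\,\|\mb_{11}^{-1}\|_2)$.

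Finally I would put the pieces together. The quantity $\tlmin\,\vb^T(\mb_{11}-\tlmin\mi)^{-1}\mb_{11}^{-1}\vb$ is nonnegative, so replacing it by its upper bound only decreases the right-hand side; bounding the leading $\tlmin$ there by $\beta$ (again via $\tlmin\le\beta$) gives precisely
\[
\tlmin\ \ge\ \beta-\vb^T\mb_{11}^{-1}\vb-\frac{\beta\,\|\mb_{11}^{-1}\vb\|_2^2}{1-\beta\,\|\mb_{11}^{-1}\|_2}.
\]
The one point requiring care is the sign bookkeeping of the cubic correction: one must confirm it has a definite (nonnegative) sign, so that discarding it and enlarging its $\tlmin$-prefactor to $\beta$ both weaken the estimate in the same, correct direction. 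This is exactly where positive semi-definiteness of $\mb$ together with the separation hypothesis $\beta<\lmin(\mb_{11})$ — which also secures convergence of the Neumann series — are used.
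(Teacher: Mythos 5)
Your proof is correct and follows essentially the same route as the paper's: expand $(\mb_{11}-\tlmin\mi)^{-1}$ around $\mb_{11}^{-1}$ (your resolvent identity is an algebraic rearrangement of the Sherman--Morrison/Neumann expansion the paper uses), bound the resulting cubic remainder by $\|\mb_{11}^{-1}\vb\|_2^2/(1-\tlmin\|\mb_{11}^{-1}\|_2)$ via a Neumann/Banach estimate, and finally enlarge $\tlmin$ to $\beta$ using the sandwich $0\le\tlmin\le\beta$. Your added observation that the cubic correction is a nonnegative quadratic form is a small refinement the paper does not explicitly make (it simply bounds the term's norm, which suffices), but the two arguments are otherwise the same.
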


\begin{proof}
Abbreviate $\tlmin\equiv\lmin(\mb)$.
From (\ref{e_r21}) follows that $\mb_{11}$ and 
$\mb_{11}-\tlmin\mi$ are nonsingular. Combined
with the symmetric positive semi-definiteness of $\mb_{11}$
this gives
\begin{align*}
\tlmin<\lmin(\mb_{11})=1/\|\mb_{11}^{-1}\|_2,
\end{align*}
hence
\begin{align}\label{e_t22a}
\|\tlmin\mb_{11}^{-1}\|_2<1.
\end{align}
Thus we can apply the Sherman-Morrison formula \cite[Section 2.1.4]{GovL13},
\begin{equation*}
(\mb_{11}-\tlmin\mi)^{-1}=\mb_{11}^{-1}+\tlmin\,\mb_{11}^{-1}(\mi-\tlmin\,\mb_{11}^{-1})^{-1}\mb_{11}^{-1},
\end{equation*}
and substitute the above
into the expression for $\tlmin$ from Lemma~\ref{l_dp1}, 
\begin{align}\label{e_t22c}
\tlmin&=\beta-\vb^T\mb_{11}^{-1}\vb-\tlmin\,\vb^T\mb_{11}^{-1}(\mi-\tlmin\,\mb_{11}^{-1})^{-1}\mb_{11}^{-1}\vb.
\end{align}
The symmetric positive semi-definiteness of $\mb$ implies
that $\beta\geq 0$ and $\vb^T\mb_{11}^{-1}\vb\geq 0$, hence it remains
to bound the norm of the remaining summand.
From the symmetry of $\mb_{11}$ and the invariance of the two-norm under transposition follows
\begin{align}
\|\vb^T\mb_{11}^{-1}(\mi-\tlmin\,\mb_{11}^{-1})^{-1}\mb_{11}^{-1}\vb\|_2
&\leq \|\vb^T\mb_{11}^{-1}\|_2\|(\mi-\tlmin\,\mb_{11}^{-1})^{-1}\|_2
\|\mb_{11}^{-1}\vb\|_2\notag\\
&\leq \|\mb_{11}^{-1}\vb\|_2^2\|(\mi-\tlmin\,
\mb_{11}^{-1})^{-1}\|_2.\label{e_t22b}
\end{align}
The inequality (\ref{e_t22a}) allows us to apply the Banach lemma \cite[Lemma 2.3.3]{GovL13} to bound the norm of the inverse by
\begin{align*}
\|(\mi-\tlmin\,\mb_{11}^{-1})^{-1}\|_2\leq 
\frac{1}{1-\|\tlmin\mb_{11}^{-1}\|_2}
=\frac{1}{1-\tlmin\|\mb_{11}^{-1}\|_2.}
\end{align*}
Substitute this into (\ref{e_t22b}) and the resulting bound
into the expression for $\tlmin$ in (\ref{e_t22c}),
\begin{align*}
\tlmin\geq \beta-\vb^T\mb_{11}^{-1}\vb-\frac{\tlmin\,\|\mb_{11}^{-1}\vb\|_2^2}{1-\tlmin\,\|\mb_{11}^{-1}\|_2},
\end{align*}
and at last apply the upper bound~(\ref{e_dp1}).
\end{proof}

The lower bound in Theorem~\ref{t_dp4} is positive if $\|\vb\|_2$ is sufficiently 
small, in which case  $\lmin(\mb)\geq \beta -\mathcal{O}(\|\vb\|_2^2)$.
If $\vb=\vzero$ then (\ref{e_dp1}) 
and Theorem~\ref{t_dp4} imply $\lmin(\mb)=\beta$.

The slightly weaker bound below focusses on a `dominant part' 
of~$\mb_{11}$.

\begin{theorem}[Second lower bound]\label{t_dp5}
Let $\mb\in\rnn$ be symmetric positive semi-definite with $\rank(\mb)\geq n-1$, and partition
\begin{align*}
\mb=\begin{bmatrix}\mb_{11} & \vb \\ \vb^T & \beta\end{bmatrix} \qquad \text{where}\quad  \mb_{11}\in\real^{(n-1)\times (n-1)}.
\end{align*}
If  $\mb_{11}=\mc_{11}+\mc_{12}$ where
$\mc_{11}$ is symmetric positive definite with $\lmin(\mc_{11})>\beta$, 
and $\mc_{12}$ is symmetric positive semi-definite then
\begin{align*}
\lmin(\mb)\geq \beta-\vb^T\mc_{11}^{-1}\vb-\frac{\beta\,\|\mc_{11}^{-1}\vb\|_2^2}{1-\beta\,\|\mc_{11}^{-1}\|_2}.
\end{align*}
\end{theorem}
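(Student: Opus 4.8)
The plan is to reduce Theorem~\ref{t_dp5} to Theorem~\ref{t_dp4} by applying the latter to a suitably modified matrix. The obstacle is that Theorem~\ref{t_dp4} requires the full leading block $\mb_{11}$ to satisfy $\beta<\lmin(\mb_{11})$, whereas here we only know the dominant part $\mc_{11}$ satisfies $\lmin(\mc_{11})>\beta$; the correction $\mc_{12}$, being only positive semi-definite, can only help (it raises eigenvalues), but we need to track how it interacts with the bound.

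First I would observe that since $\mc_{12}$ is symmetric positive semi-definite and $\mc_{11}$ is symmetric positive definite with $\lmin(\mc_{11})>\beta$, Weyl's inequality gives $\lmin(\mb_{11})=\lmin(\mc_{11}+\mc_{12})\geq\lmin(\mc_{11})>\beta$, so the hypothesis of Theorem~\ref{t_dp4} (equivalently Lemma~\ref{r_dp1}) is satisfied and $0\leq\tlmin\equiv\lmin(\mb)\leq\beta<\lmin(\mb_{11})$. Hence $\mb_{11}-\tlmin\mi$ is nonsingular, and by Lemma~\ref{l_dp1},
\begin{align*}
\tlmin=\beta-\vb^T(\mb_{11}-\tlmin\mi)^{-1}\vb.
\end{align*}
Next I would write $\mb_{11}-\tlmin\mi=\mc_{11}-\tlmin\mi+\mc_{12}$ and treat $\mc_{11}-\tlmin\mi$ as the ``dominant'' nonsingular part. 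Since $\lmin(\mc_{11})>\beta\geq\tlmin$, we have $\lmin(\mc_{11}-\tlmin\mi)=\lmin(\mc_{11})-\tlmin>0$, so $\mc_{11}-\tlmin\mi$ is symmetric positive definite, and in particular $\|(\mc_{11}-\tlmin\mi)^{-1}\|_2=1/(\lmin(\mc_{11})-\tlmin)\leq 1/(\lmin(\mc_{11})-\beta)$. Because $\mc_{12}\succeq\vzero$, adding it only increases the matrix in the Loewner order, which for these symmetric positive definite matrices yields $(\mb_{11}-\tlmin\mi)^{-1}\preceq(\mc_{11}-\tlmin\mi)^{-1}$, hence
\begin{align*}
0\leq\vb^T(\mb_{11}-\tlmin\mi)^{-1}\vb\leq\vb^T(\mc_{11}-\tlmin\mi)^{-1}\vb.
\end{align*}
Substituting this into the expression for $\tlmin$ gives $\tlmin\geq\beta-\vb^T(\mc_{11}-\tlmin\mi)^{-1}\vb$.

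It remains to bound $\vb^T(\mc_{11}-\tlmin\mi)^{-1}\vb$ from above in terms of $\mc_{11}^{-1}$ alone, which is exactly the Sherman--Morrison / Banach lemma argument already carried out in the proof of Theorem~\ref{t_dp4}, now applied to $\mc_{11}$ in place of $\mb_{11}$: since $\tlmin\|\mc_{11}^{-1}\|_2\leq\beta/\lmin(\mc_{11})<1$, we may write $(\mc_{11}-\tlmin\mi)^{-1}=\mc_{11}^{-1}+\tlmin\,\mc_{11}^{-1}(\mi-\tlmin\mc_{11}^{-1})^{-1}\mc_{11}^{-1}$, bound the norm of the second summand by $\|\mc_{11}^{-1}\vb\|_2^2/(1-\tlmin\|\mc_{11}^{-1}\|_2)$ via the Banach lemma, and finally replace $\tlmin$ by its upper bound $\beta$ from (\ref{e_dp1}) in the (negative) remainder term. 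This produces
\begin{align*}
\lmin(\mb)\geq\beta-\vb^T\mc_{11}^{-1}\vb-\frac{\beta\,\|\mc_{11}^{-1}\vb\|_2^2}{1-\beta\,\|\mc_{11}^{-1}\|_2},
\end{align*}
as claimed. The one place requiring a little care is the Loewner-order monotonicity step $(\mc_{11}-\tlmin\mi+\mc_{12})^{-1}\preceq(\mc_{11}-\tlmin\mi)^{-1}$: this is standard (operator antitonicity of inversion on positive definite matrices), but one should make sure both matrices are genuinely positive definite, which is guaranteed precisely by the hypothesis $\lmin(\mc_{11})>\beta\geq\tlmin$.
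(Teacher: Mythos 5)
Your proof is correct and follows essentially the same route as the paper's: both pass through Lemma~\ref{l_dp1}, establish the Loewner inequality $(\mb_{11}-\tlmin\mi)^{-1}\preceq(\mc_{11}-\tlmin\mi)^{-1}$ to drop $\mc_{12}$, and then reuse the Sherman--Morrison and Banach-lemma steps from the proof of Theorem~\ref{t_dp4}. The only differences are cosmetic: the paper derives the Loewner step via an explicit square-root factorization $\mg^{1/2}(\mi+\mh)\mg^{1/2}$ rather than citing antitonicity of inversion directly, and you make explicit (via Weyl) the fact that $\lmin(\mb_{11})>\beta$, which the paper leaves implicit.
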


\begin{proof}
 Abbreviate $\tlmin\equiv\lmin(\mb)$.
From~(\ref{e_dp1}) and the assumption follows 
$\tlmin\leq\beta<\lmin(\mc_{11})$, hence $\mc_{11}$ and
$\mc_{11}-\tlmin\mi$ are nonsingular. Write
\begin{align*}
\mb_{11}-\tlmin\mi=\underbrace{\mc_{11}-\tlmin\mi}_{\mg}+\mc_{12}=
\mg^{1/2}\,(\mi+\underbrace{\mg^{-1/2}\,\mc_{12}\,\mg^{-1/2}}_{\mh})\,\mg^{1/2},
\end{align*}
where $\mg$ is symmetric positive definite, $\mg^{1/2}$ is its symmetric positive definite square root,
and $\mh$ is symmetric positive semi-definite.
The Loewner partial ordering\footnote{For Hermitian matrices $\ma$ and $\mb$,
$\ma\preceq \mb$ means that $\ma-\mb$ is positive semi-definite.}
implies $\mi\preceq\mi+\mh$.
From \cite[Corollary 7.7.4]{HoJoI} follows $(\mi+\mh)^{-1}\preceq\mi^{-1}=\mi$. 
Thus
\begin{align*}
(\mb_{11}-\tlmin\mi)^{-1}&=\mg^{-1/2}(\mi+\mh)^{-1}\mg^{-1/2}\\
&\preceq \mg^{-1/2}\mi\mg^{-1/2}=\mg^{-1}=(\mc_{11}-\tlmin\mi)^{-1}.
\end{align*}
Substituting $(\mb_{11}-\tlmin\mi)^{-1}\preceq (\mc_{11}-\tlmin\mi)^{-1}$
into the expression for $\tlmin$ in Lemma~\ref{l_dp1} gives
\begin{align*}
\tlmin\geq \beta-\vb^T(\mc_{11}-\tlmin\mi)^{-1}\vb.
\end{align*}
We continue as in the proof of Theorem~\ref{t_dp4} with the
Sherman-Morrison formula \cite[Section 2.1.4]{GovL13},
\begin{align*}
\tlmin&\geq\beta-\vb^T\mc_{11}^{-1}\vb-\tlmin\,\vb^T\mc_{11}^{-1}(\mi-\tlmin\,\mc_{11}^{-1})^{-1}\mc_{11}^{-1}\vb\\
&\geq \beta-\vb^T\mc_{11}^{-1}\vb-\frac{\tlmin\,\|\mc_{11}^{-1}\vb\|_2^2}{1-\tlmin\,\|\mc_{11}^{-1}\|_2},
\end{align*}
and at last apply (\ref{e_dp1}).
 \end{proof}

If $\mc_{12}=\vzero$, then Theorem~\ref{t_dp5} reduces to Theorem~\ref{t_dp4}.

\subsection{A lower bound for the smallest singular value}\label{s_slb}
We consider a matrix with a distinct smallest singular value.
Based on the eigenvalue bounds in section~\ref{s_sev}, we derive a lower bound
for the smallest singular value of a perturbed matrix (Theorem~\ref{t_det3}) in terms
of normwise absolute perturbations.
We start with a summary of all assumptions (Assumptions~\ref{ass_1}), and end with a 
discussion of their generality (Remark~\ref{r_det3}).

\begin{assumptions}\label{ass_1}
Let $\ma\in\rmn$ with $m\geq n$ have $\rank(\ma)\geq n-1$.
Let $\ma=\mU\msig\mv^T$ be a full singular value decomposition,
where $\msig\in\rmn$ is diagonal, and
$\mU\in\rmm$ and $\mv\in\rnn$ are orthogonal matrices. Partition commensurately,
\begin{align*}
\msig=\begin{bmatrix} \msig_1 & \vzero \\ \vzero & \smin\\ \vzero & \vzero \end{bmatrix},\qquad 
\me=\mU\begin{bmatrix} \me_{11} & \ve_{12} \\ \ve_{21}^T & e_{22}\\ \me_{31}& \ve_{32} \end{bmatrix}\mv^T,
\end{align*}
where $\msig_1\in\real^{(n-1)\times (n-1)}$ is nonsingular diagonal, 
and $\smin\geq 0$.
\end{assumptions}

For a matrix with a single smallest singular value, we corroborate the observation that 
`small singular values 
tend to increase'  \cite[page 266]{SSun90}. Motivated by the 
second-order expressions in
terms of absolute perturbations \cite[Section V.4.2]{SSun90} and \cite[Theorem~8]{SteM06}, 
we derive a true lower bound.

\begin{theorem}\label{t_det3}
Let $\ma,\me\in\rmn$ satisfy Assumptions~\ref{ass_1}.
If $1/\|\msig_{1}^{-1}\|_2>4\|\me\|_2$ and $\smin<\|\me\|_2$,
then 
\begin{align*}
\smin(\ma+\me)^2\geq\|\ve_{32}\|_2^2+(\smin+e_{22})^2 -r_3-r_4,
\end{align*}
where $r_3$ contains terms of order 3,
\begin{align*}
r_3\equiv 2\ve_{12}^T\vw\qquad 
\vw\equiv(\msig_1+\me_{11})^{-T}\begin{bmatrix}\ve_{21}& \me_{31}^T\end{bmatrix}
\begin{bmatrix} e_{22}+\smin\\ \ve_{32}\end{bmatrix},
\end{align*}
and $r_4$ contains terms of order 4 and higher,
\begin{align*}
r_4\equiv \|\vw\|_2^2
+4\frac{\|\me\|_2^2\,\|(\msig_1+\me_{11})^{-1}(\ve_{12}+\vw)\|_2^2}
{1-4\|\me\|_2^2\|(\msig_1+\me_{11})^{-1}\|_2^2}.
\end{align*}
\end{theorem}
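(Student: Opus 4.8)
The plan is to square, pass to eigenvalues of a Gram matrix, and invoke Theorem~\ref{t_dp5}. Since $\mU$ and $\mv$ are orthogonal, $\ma+\me=\mU\left(\msig+\mU^T\me\mv\right)\mv^T$, so $\smin(\ma+\me)^2=\lmin(\mb)$ where $\mb\equiv\left(\msig+\mU^T\me\mv\right)^T\left(\msig+\mU^T\me\mv\right)$. Splitting $\msig+\mU^T\me\mv$ into its first $n-1$ columns and its last column, as dictated by Assumptions~\ref{ass_1}, and forming the block Gram product yields the partition of Lemma~\ref{l_dp1}: the leading $(n-1)\times(n-1)$ block is $\mb_{11}=(\msig_1+\me_{11})^T(\msig_1+\me_{11})+\ve_{21}\ve_{21}^T+\me_{31}^T\me_{31}$, the off-diagonal vector is $\vb=(\msig_1+\me_{11})^T\ve_{12}+\ve_{21}(\smin+e_{22})+\me_{31}^T\ve_{32}$, and the corner scalar is $\beta=\|\ve_{12}\|_2^2+(\smin+e_{22})^2+\|\ve_{32}\|_2^2$.

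I would then instantiate Theorem~\ref{t_dp5} with the splitting $\mb_{11}=\mc_{11}+\mc_{12}$, where $\mc_{11}\equiv(\msig_1+\me_{11})^T(\msig_1+\me_{11})$ and $\mc_{12}\equiv\ve_{21}\ve_{21}^T+\me_{31}^T\me_{31}$ is symmetric positive semi-definite. Verifying its hypothesis $\lmin(\mc_{11})>\beta$ rests on two estimates. First, Weyl's inequality and $\|\me_{11}\|_2\leq\|\me\|_2$ give $\smin(\msig_1+\me_{11})\geq 1/\|\msig_1^{-1}\|_2-\|\me\|_2>3\|\me\|_2$ by the assumption $1/\|\msig_1^{-1}\|_2>4\|\me\|_2$; hence $\mc_{11}$ is positive definite with $\lmin(\mc_{11})>9\|\me\|_2^2$, and moreover $\|(\msig_1+\me_{11})^{-1}\|_2<1/(3\|\me\|_2)$, which we will need to keep denominators positive. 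Second, $(\ve_{12}^T,\ \smin+e_{22},\ \ve_{32}^T)^T$ is the last column of $\msig+\mU^T\me\mv$, hence the sum of a standard basis vector scaled by $\smin$ and a column of $\mU^T\me\mv$, so the triangle inequality gives $\beta<(\|\me\|_2+\smin)^2<4\|\me\|_2^2<\lmin(\mc_{11})$.

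With Theorem~\ref{t_dp5} in force it remains to match its three terms to the statement. Using $\mc_{11}^{-1}=(\msig_1+\me_{11})^{-1}(\msig_1+\me_{11})^{-T}$ and the identity $(\msig_1+\me_{11})^{-T}\vb=\ve_{12}+\vw$, with $\vw$ exactly the vector defined in the theorem, one obtains $\vb^T\mc_{11}^{-1}\vb=\|\ve_{12}+\vw\|_2^2=\|\ve_{12}\|_2^2+2\ve_{12}^T\vw+\|\vw\|_2^2$, so the $\|\ve_{12}\|_2^2$ in $\beta$ cancels and $\beta-\vb^T\mc_{11}^{-1}\vb=\|\ve_{32}\|_2^2+(\smin+e_{22})^2-r_3-\|\vw\|_2^2$. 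Likewise $\|\mc_{11}^{-1}\vb\|_2=\|(\msig_1+\me_{11})^{-1}(\ve_{12}+\vw)\|_2$ and $\|\mc_{11}^{-1}\|_2=\|(\msig_1+\me_{11})^{-1}\|_2^2$. Because $\beta\mapsto\beta\,c/(1-\beta\,d)$ is increasing for $0\leq\beta<1/d$ with $c,d>0$, and $4\|\me\|_2^2\|(\msig_1+\me_{11})^{-1}\|_2^2<1$, replacing $\beta$ by its upper bound $4\|\me\|_2^2$ in the last term of Theorem~\ref{t_dp5} only enlarges it and turns it into the second summand of $r_4$. Collecting terms gives $\smin(\ma+\me)^2=\lmin(\mb)\geq\|\ve_{32}\|_2^2+(\smin+e_{22})^2-r_3-r_4$.

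The main obstacle is bookkeeping rather than a conceptual hurdle: correctly forming $\mb_{11},\vb,\beta$ from the block Gram product, spotting that $(\msig_1+\me_{11})^{-T}\vb=\ve_{12}+\vw$ so the spurious $\|\ve_{12}\|_2^2$ disappears, and checking that the positivity requirements ($\mc_{11}$ positive definite, $\lmin(\mc_{11})>\beta$, and $1-4\|\me\|_2^2\|(\msig_1+\me_{11})^{-1}\|_2^2>0$) all reduce to the two stated hypotheses through the single Weyl-type bound on $\smin(\msig_1+\me_{11})$.
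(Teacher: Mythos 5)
Your argument matches the paper's proof essentially step for step: form $\mb=(\ma+\me)^T(\ma+\me)$, split $\mb_{11}=\mc_{11}+\mc_{12}$ with $\mc_{11}=(\msig_1+\me_{11})^T(\msig_1+\me_{11})$, verify $\lmin(\mc_{11})>\beta$ via Weyl and $\beta\leq 4\|\me\|_2^2$, then apply Theorem~\ref{t_dp5} and exploit $(\msig_1+\me_{11})^{-T}\vb=\ve_{12}+\vw$ to cancel $\|\ve_{12}\|_2^2$. The only addition is your explicit observation that $\beta\mapsto\beta c/(1-\beta d)$ is increasing, which the paper leaves implicit when substituting $\beta\leq 4\|\me\|_2^2$; this is a small but welcome clarification, and the proof is otherwise identical in approach.
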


\begin{proof}
We square the singular values of $\ma+\me$, and consider the eigenvalues of
\begin{align*}
\mb\equiv (\ma+\me)^T(\ma+\me)=
\mv\begin{bmatrix} \mb_{11} & \vb \\ \vb^T & \beta\end{bmatrix}\mv^T
\end{align*}
where
\begin{align}
\mb_{11}&=\underbrace{(\msig_1+\me_{11})^T(\msig_1+\me_{11})}_{\mc_{11}}
+\underbrace{\ve_{21}\ve_{21}^T+\me_{31}^T\me_{31}}_{\mc_{12}}\label{e_C11}\\
\begin{split} \label{e_aux7}
\beta&=\|\ve_{12}\|_2^2+(\smin+e_{22})^2+\|\ve_{32}\|_2^2\\
\vb& = (\msig_1+\me_{11})^T\ve_{12}+\ve_{21}(\smin+e_{22}) +\me_{31}^T\ve_{32}.
\end{split}\end{align}
From $\smin(\msig_1)> 4\|\me\|_2$ follows that
$\mc_{11}$ is  symmetric positive definite, while $\mc_{12}$ is symmetric positive semi-definite and contains only second order terms.
Abbreviate $\tlmin\equiv \lmin(\mb)=\smin(\ma+\me)^2$.

The proof proceeds in two steps:
\begin{enumerate}
\item Confirming that  $\mc_{11}$ satisfies the assumptions of Theorem~\ref{t_dp5}.
\item Deriving the lower bound for $\tlmin$ from Theorem~\ref{t_dp5}.
\end{enumerate}
\smallskip

\paragraph{1. Confirm that  $\mc_{11}$ satisfies the assumptions of Theorem~\ref{t_dp5}}
We show that   $\lmin(\mc_{11})>\beta$, by bounding $\beta$ from above
and $\lmin(\mc_{11})$ from below.

Regarding the  upper bound for $\beta$, the expression (\ref{e_aux7}) and 
the assumption $\smin<\|\me\|_2$ imply
\begin{align}\label{e_aux2}
\beta=\left\|\begin{bmatrix} \ve_{12}^T & e_{22}+\smin& \ve_{32}^T\end{bmatrix}^T \right\|_2^2
\leq (\smin+\|\me\ve_n\|_2)^2\leq 4\|\me\|_2^2.
\end{align}
Regarding the lower bound for $\lmin(\mc_{11})$, view
$\mc_{11}=(\msig_1+\me_{11})^T(\msig_1+\me_{11})$
as a singular value problem,
so that $\lmin(\mc_{11})=\smin(\msig_1+\me_{11})^2$.
The well-conditioning of singular values \cite[Corollary 8.6.2]{GovL13} implies
\begin{equation*}
|\smin(\msig_1+\me_{11})-\smin(\msig_1)|\leq \|\me_{11}\|_2\leq \|\me\|_2.
\end{equation*}
Adding the assumption  
$\smin(\msig_1)=1/\|\msig_1^{-1}\|_2>4\|\me\|_2$ gives
\begin{equation*}
\smin(\msig_1+\me_{11})\geq \smin(\msig_1)-\|\me\|_2
>4\,\|\me\|_2-\|\me\|_2=3\|\me\|_2.
\end{equation*}
Now combine this lower bound for $\lmin(\mc_{11})$ with (\ref{e_aux2}),
\begin{equation*}
\lmin(\mc_{11})=\smin(\msig_1+\me_{11})^2>9\|\me\|_2^2>4\|\me\|_2^2\geq \beta.
\end{equation*}
Hence $\lmin(\mc_{11})>\beta$, and $\mc_{11}$ satisfies the assumptions of Theorem~\ref{t_dp5}.
\smallskip

\paragraph{2. Derive the  lower bound for $\tlmin$ from Theorem~\ref{t_dp5}}
In this bound,
\begin{align}\label{e_aux30}
\tlmin\geq \beta-\vb^T\mc_{11}^{-1}\vb-\frac{\beta\,\|\mc_{11}^{-1}\vb\|_2^2}{1-\beta\,\|\mc_{11}^{-1}\|_2},
\end{align}
where the key term is $\mc_{11}^{-1}\vb$. 
Insert the expression for $\vb$ from (\ref{e_aux7}),
\begin{align}
(\msig_1+\me_{11})^{-T}\vb&=
\ve_{12}+(\msig_1+\me_{11})^{-T}\left(\ve_{21}(\smin+e_{22})
+\me_{31}^T\ve_{32}\right)\notag\\
&=\ve_{12}+\vw.\label{eqn:pd01}
\end{align}
Combine the expression for $\mc_{11}$ from (\ref{e_C11}) with 
(\ref{eqn:pd01}),
\begin{align}\label{e_c11b}
\mc_{11}^{-1}\vb=(\msig_1+\me_{11})^{-1}
\underbrace{(\msig_1+\me_{11})^{-T}\vb}_{\ve_{12}+\vw}
=(\msig_1+\me_{11})^{-1}(\ve_{12}+\vw)
\end{align}
Multiply the above by $\vb^T$ on the left, and use (\ref{eqn:pd01})
\begin{align*}
\begin{split}
\vb^T\mc_{11}^{-1}\vb&=
\vb^T(\msig_1+\me_{11})^{-1}(\msig_1+\me_{11})^{-T}\vb=
(\ve_{12}+\vw)^T(\ve_{12}+\vw)\\
&=\|\ve_{12}+\vw\|_2^2
=\|\ve_{12}\|_2^2+2\ve_{12}^T\vw+\|\vw\|_2^2.
\end{split}
\end{align*}
Substitute the above, and  $\beta$ from (\ref{e_aux7}) into the first two summands 
of~(\ref{e_aux30}),
\begin{align}
\beta-\vb^T\mc_{11}^{-1}\vb&=(\|\ve_{12}\|_2^2+
(\smin+e_{22})^2+\|\ve_{32}\|_2^2)-
(\|\ve_{12}\|_2^2+2\ve_{12}^T\vw+\|\vw\|_2^2)\notag\\
&=\|\ve_{32}\|_2^2+(\smin+e_{22})^2-\underbrace{2\ve_{12}^T\vw}_{r_3}-
\|\vw\|_2^2.\label{e_aux30a}
\end{align}
Substitute the bound for $\beta$ in (\ref{e_aux2}),
and (\ref{e_c11b}) into the third summand of~(\ref{e_aux30}),
\begin{align}\label{e_aux30b}
\frac{\beta\,\|\mc_{11}^{-1}\vb\|_2^2}{1-\beta\,\|\mc_{11}^{-1}\|_2}\leq
4\frac{\|\me\|_2^2\,\|(\msig_1+\me_{11})^{-1}(\ve_{12}+\vw)\|_2^2}
{1-4\|\me\|_2^2\|(\msig_1+\me_{11})^{-1}\|_2^2}.
\end{align}
Inserting (\ref{e_aux30a}) and (\ref{e_aux30b}) into (\ref{e_aux30}) gives
\begin{align*}
\tlmin&\geq \|\ve_{32}\|_2^2+(\smin+e_{22})^2\\
&\quad -r_3-
\underbrace{\left(\|\vw\|_2^2+4\frac{\|\me\|_2^2\,\|(\msig_1+\me_{11})^{-1}(\ve_{12}+\vw)\|_2^2}
{1-4\|\me\|_2^2\|(\msig_1+\me_{11})^{-1}\|_2^2}\right)}_{r_4}.
\end{align*}
\end{proof}

\begin{remark}\label{r_det3}
The assumptions in Theorem~\ref{t_det3} are not restrictive.
Only a small gap of $3\|\me\|_2$ is required to 
separate the smallest singular value of $\ma$ from the remaining singular values,
\begin{align*}
\smin(\ma)<\|\me\|_2<4\|\me\|_2\leq 1/\|\msig_1^{-1}\|_2.  
\end{align*}
\end{remark}

\section{A cluster of small singular values}\label{s_cluster}
We extend the results in Section~\ref{s_single} from a single smallest singular value to a cluster of small singular values. 
To this end, we derive lower bounds for the small singular values of the perturbed matrix
 in terms of normwise absolute perturbations
 (Section~\ref{s_clb}),  based on eigenvalue bounds (Section~\ref{s_cev}).

\subsection{Auxiliary eigenvalue results}\label{s_cev}
We square the singular values of $\ma\in\rmn$ and consider instead the eigenvalues 
of the symmetric positive semi-definite matrix $\mb\equiv \ma^T\ma\in\rnn$.

For a symmetric positive semi-definite matrix $\mb\in\rnn$ 
with a cluster of $r$ small eigenvalues, we present an expression for these  eigenvalues
(Lemma~\ref{l_dp2}), and two lower bounds in terms
of normwise absolute perturbations (Theorems \ref{t_dp6} 
and~\ref{t_dp7}).

We assume that the $r$ small eigenvalues are  separated 
from the remaining ones, in the sense that they are strictly smaller than
the smallest eigenvalue of the leading principal submatrix $\mb_{11}$
of order $n-r$. The eigenvalues are labelled so that
\begin{align*}
\lambda_n(\mb) \leq \cdots \leq \lambda_{n-r+1}(\mb)<
\lambda_{n-r}(\mb)\leq \cdots \leq \lambda_1(\mb).
\end{align*}

The equality  below expresses the smallest eigenvalues in terms of themselves,
and represents an extension of Lemma~\ref{r_dp1} to clusters.

\begin{lemma}[Exact expression]\label{l_dp2}
Let $\mb\in\rnn$ be symmetric positive semi-definite with $\rank(\mb)\geq n-r$
for some $r\geq 1$, and partition
\begin{align*}
\mb=\begin{bmatrix}\mb_{11} & \mb_{12} \\ \mb_{12}^T & \mb_{22}\end{bmatrix} \qquad \text{where}\quad  \mb_{11}\in\real^{(n-r)\times (n-r)}, \quad \mb_{22}\in\real^{r\times r}.
\end{align*}
If  $\|\mb_{22}\|_2<\lmin(\mb_{11})$ then 
\begin{equation*}
\lambda_{n-r+j}(\mb)=\lambda_j\left(\mb_{22}-\mb_{12}^T(\mb_{11}-
\lambda_{n-r+j}(\mb)\,\mi)^{-1}\mb_{12}\right),
\qquad 1\leq j\leq r,
\end{equation*}
where 
\begin{equation}\label{e_dp2}
0\leq \lambda_{n-r+j}(\mb)\leq  \|\mb_{22}\|_2,\qquad 1\leq j\leq r.
\end{equation}
\end{lemma}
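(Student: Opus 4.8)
The plan is to extend the block-elimination argument behind Lemma~\ref{r_dp1} from a scalar Schur complement to an $r\times r$ one, replacing the observation that a single pivot vanishes by an inertia count. Fix $j$ with $1\le j\le r$ and abbreviate $\mu\equiv\lambda_{n-r+j}(\mb)$. First I would establish the bounds \eqref{e_dp2} and the nonsingularity of the shift: the lower bound $\mu\ge 0$ is immediate from the positive semi-definiteness of $\mb$, and for the upper bound I would invoke the Cauchy interlacing theorem \cite{HoJoI} for the $r\times r$ principal submatrix $\mb_{22}$ of $\mb$, which gives $\lambda_{n-r+j}(\mb)\le\lambda_j(\mb_{22})\le\lambda_1(\mb_{22})=\|\mb_{22}\|_2$, the last equality because $\mb_{22}$ is positive semi-definite. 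Combined with the hypothesis $\|\mb_{22}\|_2<\lmin(\mb_{11})$ this yields $0\le\mu<\lmin(\mb_{11})$, so $\mb_{11}-\mu\mi$ is positive definite, hence invertible.

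Next, since $\mb_{11}-\mu\mi$ is invertible, I would write the block $LDL^{T}$ factorization of the shifted matrix,
\begin{align*}
\mb-\mu\mi=\ml\begin{bmatrix}\mb_{11}-\mu\mi & \vzero\\ \vzero & \mathbf{S}\end{bmatrix}\ml^T,
\qquad
\mathbf{S}\equiv(\mb_{22}-\mu\mi)-\mb_{12}^T(\mb_{11}-\mu\mi)^{-1}\mb_{12},
\end{align*}
where $\ml$ is unit lower block-triangular with $(2,1)$-block $\mb_{12}^T(\mb_{11}-\mu\mi)^{-1}$, hence invertible. By Sylvester's law of inertia, $\mb-\mu\mi$ is congruent to the block-diagonal middle factor and shares its inertia; since $\mb_{11}-\mu\mi$ contributes exactly $n-r$ positive eigenvalues and nothing else, the numbers of positive, negative, and zero eigenvalues of $\mathbf{S}$ equal those of $\mb-\mu\mi$ with $n-r$ removed from the positive count.

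Finally I would run the counting argument. Because $\mu=\lambda_{n-r+j}(\mb)$, the matrix $\mb-\mu\mi$ has at most $n-r+j-1$ strictly positive and at most $r-j$ strictly negative eigenvalues. Subtracting the $n-r$ positive eigenvalues coming from $\mb_{11}-\mu\mi$, the $r\times r$ matrix $\mathbf{S}$ has at most $j-1$ positive and at most $r-j$ negative eigenvalues; as it has exactly $r$ eigenvalues in total, both bounds are tight and its $j$-th largest eigenvalue equals $0$. Since $\mathbf{S}+\mu\mi=\mb_{22}-\mb_{12}^T(\mb_{11}-\mu\mi)^{-1}\mb_{12}$, shifting every eigenvalue of $\mathbf{S}$ by $\mu$ gives $\lambda_j(\mb_{22}-\mb_{12}^T(\mb_{11}-\mu\mi)^{-1}\mb_{12})=\mu$, which is the asserted identity.

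I expect this last step to be the main obstacle. In the scalar case $r=1$ of Lemma~\ref{r_dp1} one only needs a single pivot to vanish, whereas here one must pin the zero eigenvalue of $\mathbf{S}$ to slot $j$ in its spectrum, and this requires the two-sided inertia estimate rather than a bare rank count; the hypothesis $\|\mb_{22}\|_2<\lmin(\mb_{11})$ is precisely what keeps $\mb_{11}-\mu\mi$ positive definite for every cluster eigenvalue $\mu$, so that the factorization and the inertia split are available uniformly in $j$.
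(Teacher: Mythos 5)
Your proof is correct and follows essentially the same route as the paper: Cauchy interlacing for the bound $\lambda_{n-r+j}(\mb)\le\|\mb_{22}\|_2$, positive definiteness of $\mb_{11}-\mu\mi$, the block congruence to the Schur complement $\ms$, Sylvester inertia, a two-sided count on the inertia of $\ms$, and then shifting by $\mu$ to read off the identity. One remark on your inertia count: you correctly derive that $\ms$ has at most $j-1$ positive and at most $r-j$ negative eigenvalues, which is exactly what forces $\lambda_j(\ms)=0$; the paper's text at this step states the count with the two quotas interchanged, but its conclusion agrees with yours. A small slip in your wording: \enquote{both bounds are tight} is not actually implied (e.g.\ if all $r$ small eigenvalues of $\mb$ coincide, then $\ms=\vzero$ and neither bound is attained), but you do not need tightness -- the two one-sided inequalities $\lambda_j(\ms)\le 0$ and $\lambda_j(\ms)\ge 0$ already give $\lambda_j(\ms)=0$, so the argument stands as written.
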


\begin{proof}
Abbreviate $\tl_{n-r+j}\equiv\lambda_{n-r+j}(\mb)$, $1\leq j\leq r$.
The lower bound in (\ref{e_dp2}) follows from the positive semi-definiteness of $\mb$, and the
upper bound  from the  Cauchy interlace theorem~\cite[Section~10.1]{Par80} 
\begin{align*}
\tl_{n-r+j}\leq \lambda_{j}(\mb_{22})\leq \lmax(\mb_{22})=\|\mb_{22}\|_2,\qquad 1\leq j\leq r.
\end{align*}
Combining this with the assumption $\|\mb_{22}\|_2<\lmin(\mb_{11})$
shows 
\begin{align}\label{e_aux8}
\tl_{n-r+j}\leq \|\mb_{22}\|_2<\lmin(\mb_{11}), \qquad 1\leq j\leq r.
\end{align}
Hence is $\mb_{11}-\tl_{n-r+j}\mi$ is nonsingular.

To derive the expression for $\tl_{n-r+j}$ , we start as in the proof of \cite[Theorem (10-1-2)]{Par80}.
The shifted matrix $\mb-\tl_{n-r+j}\mi$ has at most $n-r+j-1$ positive eigenvalues, at least one zero eigenvalue, and at most $r-j$ negative eigenvalues, $1\leq j\leq r$. Perform the congruence transformation
\begin{align*}
\mb-\tl_{n-r+j}\mi=\ml\begin{bmatrix}\mb_{11}-\tl_{n-r+j}\mi&\vzero \\ \vzero &\ms\end{bmatrix}\ml^T,\qquad
\ml\equiv\begin{bmatrix}\mi& \vzero \\ \mb_{12}^T(\mb_{11}-\tl_{n-r+j}\mi)^{-1} & \mi\end{bmatrix}
\end{align*}
where
\begin{align}\label{e_aux9}
\ms\equiv \mb_{22}-\tl_{n-r+j}\mi-\mb_{12}^T(\mb_{11}-\tl_{n-r+j}\mi)^{-1}\mb_{12},\qquad 
1\leq j\leq r.
\end{align}
From (\ref{e_aux8}) follows that $\mb_{11}-\tl_{n-r+j}\mi$ has $n-r$ positive eigenvalues. 
Combining this with the inertia preservation of congruence transformations implies that
$\ms$ has at most $r-j$ positive eigenvalues, at least one zero eigenvalue $\lambda_j(\ms)=0$, and at least $j-1$ negative eigenvalues, $1\leq j\leq r$.
Insert (\ref{e_aux9}) into $\lambda_j(\ms)=0$, and exploit the fact that the shift $\tl_{n-r+j}\,\mi$  
does not 
change the algebraic eigenvalue ordering, to obtain  the expression for $\tl_{n-r+j}$, 
$1\leq j\leq r$. 
\end{proof}


By restricting ourselves to a `dominant part' of $\mb_{11}$, we  weaken the expression in Lemma~\ref{l_dp2} to a lower bound, which allows the eigenvalues 
to be
negative.

\begin{lemma}[Lower bound]\label{l_dp2a}
Let $\mb\in\rnn$ be symmetric positive semi-definite with $\rank(\mb)\geq n-r$
for some $r\geq 1$, and partition
\begin{align*}
\mb=\begin{bmatrix}\mb_{11} & \mb_{12} \\ \mb_{12}^T & \mb_{22}\end{bmatrix}
\qquad \mathrm{where}\quad \mb_{11}\in\real^{(n-r)\times (n-r)},\quad
\mb_{22}\in\real^{r\times r}.
\end{align*}
Let $\mb_{11}=\mc_{11}+\mc_{12}$
where $\mc_{11}\in\real^{(n-r)\times (n-r)}$ is symmetric positive definite
and $\mc_{12}\in\real^{(n-r)\times (n-r)}$ is symmetric positive semi-definite. If
\begin{align*}
\widehat{\mb}\equiv\begin{bmatrix}\mc_{11} & \mb_{12} \\ \mb_{12}^T & \mb_{22}\end{bmatrix}
\end{align*}
with $\lmin(\mc_{11})>\|\mb_{22}\|_2$, then
\begin{align}
\lambda_{n-r+j}(\mb)&\geq \lambda_{n-r+j}(\widehat{\mb})\label{e_dp2b}\\
&= \lambda_j\left(\mb_{22}-\mb_{12}^T(\mc_{11}-
\lambda_{n-r+j}(\widehat{\mb})\,\mi)^{-1}\mb_{12}\right),
\quad 1\leq j\leq r,\label{e_dp2c}
\end{align}
where 
\begin{align}
\lambda_{n-r+j}(\widehat{\mb})&\leq  \|\mb_{22}\|_2,\label{e_dp2a}\\
\left\|\left(\mc_{11}-\lambda_{n-r+j}(\widehat{\mb})\,\mi\right)^{-1}\right\|_2
&\leq \frac{\|\mc_{11}^{-1}\|_2}{1-\|\mb_{22}\|_2\,\|\mc_{11}^{-1}\|_2},\qquad 1\leq j\leq r.
\label{e_dp2d}
\end{align}
\end{lemma}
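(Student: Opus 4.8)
The plan is to establish the four assertions \eqref{e_dp2b}, \eqref{e_dp2a}, \eqref{e_dp2c}, \eqref{e_dp2d} in the order \eqref{e_dp2b}$\,\to\,$\eqref{e_dp2a}$\,\to\,$\eqref{e_dp2c}$\,\to\,$\eqref{e_dp2d}, reducing all of the real work to an inertia argument on $\widehat{\mb}$ that parallels the proof of Lemma~\ref{l_dp2} but no longer needs positive semi-definiteness. For \eqref{e_dp2b}, note that $\mb-\widehat{\mb}$ is block-diagonal with blocks $\mc_{12}$ and $\vzero$, hence symmetric positive semi-definite because $\mc_{12}$ is; so $\widehat{\mb}\preceq\mb$ in the Loewner ordering, and Weyl's monotonicity theorem (equivalently the min-max characterization of eigenvalues) gives $\lambda_k(\widehat{\mb})\le\lambda_k(\mb)$ for every $k$, in particular for $k=n-r+j$. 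For \eqref{e_dp2a}, apply the Cauchy interlace theorem \cite[Section~10.1]{Par80} to the trailing $r\times r$ principal submatrix $\mb_{22}$ of the symmetric matrix $\widehat{\mb}$, obtaining $\lambda_{n-r+j}(\widehat{\mb})\le\lambda_j(\mb_{22})\le\|\mb_{22}\|_2$; together with $\|\mb_{22}\|_2<\lmin(\mc_{11})$ this yields $\lambda_{n-r+j}(\widehat{\mb})<\lmin(\mc_{11})$, so the shifted block $\mc_{11}-\lambda_{n-r+j}(\widehat{\mb})\,\mi$ is symmetric positive definite, hence nonsingular; this is the fact that makes the remaining steps go through.

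For the Schur-complement identity \eqref{e_dp2c}, abbreviate $\mu_j\equiv\lambda_{n-r+j}(\widehat{\mb})$. Since the eigenvalues of $\widehat{\mb}$ are sorted non-increasingly, $\widehat{\mb}-\mu_j\mi$ has at most $n-r+j-1$ positive, at least one zero, and at most $r-j$ negative eigenvalues. Using the nonsingularity just established, carry out the block congruence
\[
\widehat{\mb}-\mu_j\mi=\ml\begin{bmatrix}\mc_{11}-\mu_j\mi & \vzero\\ \vzero & \ms_j\end{bmatrix}\ml^T,\qquad
\ml=\begin{bmatrix}\mi & \vzero\\ \mb_{12}^T(\mc_{11}-\mu_j\mi)^{-1} & \mi\end{bmatrix},
\]
with Schur complement $\ms_j\equiv\mb_{22}-\mu_j\mi-\mb_{12}^T(\mc_{11}-\mu_j\mi)^{-1}\mb_{12}\in\real^{r\times r}$. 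Because $\mc_{11}-\mu_j\mi$ contributes exactly $n-r$ positive and no nonpositive eigenvalues, Sylvester's law of inertia forces $\ms_j$ to have at most $j-1$ positive, at least one zero, and at most $r-j$ negative eigenvalues; as these counts sum to $r$, position $j$ in the ordered spectrum of $\ms_j$ falls neither among the (at most $j-1$) positive nor among the (at most $r-j$) negative eigenvalues, so $\lambda_j(\ms_j)=0$. Writing $\ms_j=\bigl(\mb_{22}-\mb_{12}^T(\mc_{11}-\mu_j\mi)^{-1}\mb_{12}\bigr)-\mu_j\mi$ and noting that the shift $-\mu_j\mi$ leaves the eigenvalue ordering unchanged converts $\lambda_j(\ms_j)=0$ into $\mu_j=\lambda_j\bigl(\mb_{22}-\mb_{12}^T(\mc_{11}-\mu_j\mi)^{-1}\mb_{12}\bigr)$, which is \eqref{e_dp2c}.

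For \eqref{e_dp2d}, since $\mc_{11}$ is symmetric positive definite and $\mu_j\le\|\mb_{22}\|_2<\lmin(\mc_{11})$, every eigenvalue of $\mc_{11}-\mu_j\mi$ is at least $\lmin(\mc_{11})-\mu_j\ge\lmin(\mc_{11})-\|\mb_{22}\|_2>0$, hence $\|(\mc_{11}-\mu_j\mi)^{-1}\|_2=1/(\lmin(\mc_{11})-\mu_j)\le 1/(\lmin(\mc_{11})-\|\mb_{22}\|_2)$; dividing numerator and denominator by $\lmin(\mc_{11})$ and using $\|\mc_{11}^{-1}\|_2=1/\lmin(\mc_{11})$ rewrites the right-hand side as $\|\mc_{11}^{-1}\|_2/(1-\|\mb_{22}\|_2\,\|\mc_{11}^{-1}\|_2)$. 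The main obstacle is the inertia bookkeeping in the middle step: one has to push the sign pattern of $\widehat{\mb}-\mu_j\mi$ --- read off from the eigenvalue ordering of $\widehat{\mb}$ --- through the congruence into the exact statement $\lambda_j(\ms_j)=0$, and, unlike in Lemma~\ref{l_dp2}, do so without $\widehat{\mb}$ being positive semi-definite, relying only on the separation $\mu_j<\lmin(\mc_{11})$ secured earlier; everything else is routine manipulation.
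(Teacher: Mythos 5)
Your argument is correct and follows the paper's proof step for step: Weyl monotonicity for \eqref{e_dp2b}, Cauchy interlacing for \eqref{e_dp2a}, the inertia-through-congruence argument for \eqref{e_dp2c} (which you carry out explicitly rather than citing the proof of Lemma~\ref{l_dp2}, correctly noting that only the sign counts matter, not positive semi-definiteness of $\widehat{\mb}$). The only place you deviate is \eqref{e_dp2d}: the paper factors $(\mc_{11}-\mu_j\mi)^{-1}=\mc_{11}^{-1}(\mi-\mu_j\mc_{11}^{-1})^{-1}$ and then bounds $\|(\mi-\mu_j\mc_{11}^{-1})^{-1}\|_2$ via an eigenvalue decomposition with separate cases for $\mu_j\geq 0$ and $\mu_j<0$, whereas you observe directly that the eigenvalues of $\mc_{11}-\mu_j\mi$ are the (positive) shifted eigenvalues $\gamma_\ell-\mu_j\geq\lmin(\mc_{11})-\|\mb_{22}\|_2>0$, so $\|(\mc_{11}-\mu_j\mi)^{-1}\|_2=1/(\lmin(\mc_{11})-\mu_j)$ regardless of the sign of $\mu_j$, and a single division by $\lmin(\mc_{11})$ yields the stated bound. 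This is a clean simplification that avoids the case split and in fact shows the intermediate inequality $1/(\lmin(\mc_{11})-\mu_j)\leq 1/(\lmin(\mc_{11})-\|\mb_{22}\|_2)$ is all that is being used; otherwise the two proofs are the same.
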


\begin{proof}
The proof proceeds in four steps.

\paragraph{Proof of (\ref{e_dp2b})}
The symmetric positive semi-definiteness of $\mc_{12}$ and  Weyl's monotonicity
theorem \cite[Corollary 4.3.3]{HoJoI} imply
\begin{equation*}
\lambda_j(\mb)\geq \lambda_j(\widehat{\mb}), \qquad 1\leq j\leq n.
\end{equation*}
Now we concentrate on the eigenvalues of $\widehat{\mb}$, and
abbreviate $\twh_{n-r+j}\equiv\lambda_{n-r+j}(\widehat{\mb)}$, $1\leq j\leq r$.

\paragraph{Proof of (\ref{e_dp2a})} Apply
the  Cauchy interlace theorem~\cite[Section~10.1]{Par80} to~$\widehat{\mb}$,
\begin{align*}
\twh_{n-r+j}\leq \lambda_{j}(\mb_{22})\leq \lmax(\mb_{22})=\|\mb_{22}\|_2,\qquad 1\leq j\leq r.
\end{align*}
Combining this with the assumption $\|\mb_{22}\|_2<\lmin(\mc_{11})$
shows 
\begin{align*}
\twh_{n-r+j}\leq \|\mb_{22}\|_2<\lmin(\mc_{11}), \qquad 1\leq j\leq r.
\end{align*}
Hence  $\mc_{11}-\twh_{n-r+j}\mi$ is nonsingular, which holds in particular if
$\twh_{n-r+j}<0$.

\paragraph{Proof of  (\ref{e_dp2c})} 
To derive the expression for $\twh_{n-r+j}$ , apply the proof of
Lemma~\ref{l_dp2} to the eigenvalues of $\widehat{\mb}$. 
This proof relies only on the signs of eigenvalues of
shifted matrices,
and does not require positive semi-definiteness of the host matrix $\widehat{\mb}$.

\paragraph{Proof of (\ref{e_dp2d})} 
Fix some $1\leq j\leq r$ for the inverse in (\ref{e_dp2c}). Then
factor out $\mc_{11}^{-1}$,
\begin{align*}
(\mc_{11}-\twh_{n-r+j}\,\mi)^{-1}=
\mc_{11}^{-1}\md \qquad \textrm{where}\quad \md\equiv (\mi-\twh_{n-r+j}\,\mc_{11}^{-1})^{-1},
\end{align*}
and take norms,
\begin{align*}
\left\|(\mc_{11}-\twh_{n-r+j}\,\mi)^{-1}\right\|_2\leq
\|\mc_{11}^{-1}\|_2\,\|\md\|_2.
\end{align*}
To bound $\|\md\|_2$, consider the eigenvalue decomposition
 $\mc_{11}=\mw\mlam\mw^T$, where $\mw$ is an orthogonal matrix, 
 and the diagonal matrix
 \begin{align*}
 \mlam=\diag\begin{pmatrix} \gamma_1 & \cdots & \gamma_{n-r}\end{pmatrix}
 \in\real^{(n-r)\times (n-r)}
 \end{align*}
  has positive diagonal elements $\gamma_{\ell}>0$.
Thus $\md$ has an eigenvalue decomposition
$\md=\mw (\mi-\twh_{n-r+j}\,\mlam^{-1})^{-1}\mw^T$
with eigenvalues  
\begin{align*}
\lambda_{\ell}(\md)=1/\left(1-\frac{\twh_{n-r+j}}{\gamma_\ell}\right),\qquad 1\leq \ell\leq n-r.
\end{align*}
Case 1: If $\twh_{n-r+j}\geq 0$, then (\ref{e_dp2a}) implies 
\begin{align*}
0\leq \frac{\twh_{n-r+j}}{\gamma_{\ell}}\leq \frac{\|\mb_{22}\|_2}{\lmin(\mc_{11})}=
\|\mb_{22}\|_2\|\mc_{11}^{-1}\|_2<1, \qquad 1\leq \ell\leq n-r.
\end{align*}
Hence
\begin{align}\label{e_aux8b}
\|\md\|_{22}=\max_{1\leq \ell\leq n-r}{|\lambda_j(\md)|}\leq \frac{1}{1-\|\mb_{22}\|_2\|\mc_{11}^{-1}\|_2}.
\end{align}
Case 2: If $\twh_{n-r+j}<0$, then $\gamma_{\ell}>0$  and (\ref{e_dp2a}) imply
\begin{align*}
1-\frac{\twh_{n-r+j}}{\gamma_\ell}=1+\frac{|\twh_{n-r+j}|}{\gamma_\ell}>1
>1-\|\mb_{22}\|_2\|\mc_{11}^{-1}\|_2,\qquad 1\leq \ell\leq n-r.
\end{align*}
Again, as in (\ref{e_aux8b}) we conclude
\begin{align*}
\|\md\|_{22}=\max_{1\leq \ell\leq n-r}{|\lambda_j(\md)|}\leq \frac{1}{1-\|\mb_{22}\|_2\|\mc_{11}^{-1}\|_2}.
\end{align*}
Since we fixed an arbitrary $j$ to show (\ref{e_dp2d}), it holds for all $1\leq j\leq r$.
\end{proof}

The subsequent lower bounds are informative if the offdiagonal
part has small norm. We start with an extension of Theorem~\ref{t_dp4}.

\begin{theorem}[First lower bound]\label{t_dp6}
Let $\mb\in\rnn$ be symmetric positive semi-definite  with $\rank(\mb)\geq n-r$
for some $r\geq 1$, and partition
\begin{align*}
\mb=\begin{bmatrix}\mb_{11} & \mb_{12} \\ \mb_{12}^T & \mb_{22}\end{bmatrix} \qquad \text{where}\quad  \mb_{11}\in\real^{(n-r)\times (n-r)}, \quad \mb_{22}\in\real^{r\times r}.
\end{align*}
If  $\|\mb_{22}\|_2<\lmin(\mb_{11})$ then 
$\lambda_{n-r+j}(\mb)\geq\lambda_j(\mz_j)$, $1\leq j\leq r$, where
\begin{equation*}
\mz_j\equiv \mb_{22}-\mb_{12}^T\mb_{11}^{-1}\mb_{12}-\|\mb_{22}\|_2\,\mb_{12}^T\mb_{11}^{-1}(\mi-\lambda_{n-r+j}(\mb)\,\mb_{11}^{-1})^{-1}\mb_{11}^{-1}\mb_{12}.
\end{equation*}
\end{theorem}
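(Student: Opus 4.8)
The plan is to lift the proof of Theorem~\ref{t_dp4} from scalars to matrices, using the Loewner partial ordering in place of scalar inequalities and Lemma~\ref{l_dp2} in place of Lemma~\ref{l_dp1}. Abbreviate $\tl_{n-r+j}\equiv\lambda_{n-r+j}(\mb)$ for $1\leq j\leq r$. First I would record what Lemma~\ref{l_dp2} gives under the hypothesis $\|\mb_{22}\|_2<\lmin(\mb_{11})$: it forces $\mb_{11}$ to be symmetric positive definite, it guarantees
\begin{equation*}
0\leq\tl_{n-r+j}\leq\|\mb_{22}\|_2<\lmin(\mb_{11}),\qquad 1\leq j\leq r,
\end{equation*}
so that $\mb_{11}-\tl_{n-r+j}\mi$ is nonsingular, and it yields the exact expression $\tl_{n-r+j}=\lambda_j\left(\mb_{22}-\mb_{12}^T(\mb_{11}-\tl_{n-r+j}\mi)^{-1}\mb_{12}\right)$.

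Next, fix $j$. Since $\tl_{n-r+j}<\lmin(\mb_{11})=1/\|\mb_{11}^{-1}\|_2$ we have $\|\tl_{n-r+j}\,\mb_{11}^{-1}\|_2<1$, so exactly as in the proof of Theorem~\ref{t_dp4} the Sherman--Morrison formula \cite[Section~2.1.4]{GovL13} gives
\begin{equation*}
(\mb_{11}-\tl_{n-r+j}\mi)^{-1}=\mb_{11}^{-1}+\tl_{n-r+j}\,\mb_{11}^{-1}(\mi-\tl_{n-r+j}\,\mb_{11}^{-1})^{-1}\mb_{11}^{-1}.
\end{equation*}
Substituting this into the expression from Lemma~\ref{l_dp2} and setting $\mP_j\equiv\mb_{12}^T\mb_{11}^{-1}(\mi-\tl_{n-r+j}\,\mb_{11}^{-1})^{-1}\mb_{11}^{-1}\mb_{12}$, I obtain
\begin{equation*}
\mb_{22}-\mb_{12}^T(\mb_{11}-\tl_{n-r+j}\mi)^{-1}\mb_{12}=\mb_{22}-\mb_{12}^T\mb_{11}^{-1}\mb_{12}-\tl_{n-r+j}\,\mP_j.
\end{equation*}

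The remaining step is to trade the scalar factor $\tl_{n-r+j}$ for the upper bound $\|\mb_{22}\|_2$ without reversing an inequality. Because $\mb_{11}^{-1}$ and $(\mi-\tl_{n-r+j}\,\mb_{11}^{-1})^{-1}$ are both functions of $\mb_{11}$, they commute, so $\mb_{11}^{-1}(\mi-\tl_{n-r+j}\,\mb_{11}^{-1})^{-1}\mb_{11}^{-1}$ is symmetric positive definite and hence $\mP_j$ is symmetric positive semi-definite. Combining $0\leq\tl_{n-r+j}\leq\|\mb_{22}\|_2$ with $\mP_j\succeq\vzero$ gives $(\|\mb_{22}\|_2-\tl_{n-r+j})\,\mP_j\succeq\vzero$, that is,
\begin{equation*}
\mb_{22}-\mb_{12}^T(\mb_{11}-\tl_{n-r+j}\mi)^{-1}\mb_{12}\succeq\mz_j.
\end{equation*}
Applying Weyl's monotonicity theorem \cite[Corollary~4.3.3]{HoJoI} to this Loewner inequality and then the exact expression from Lemma~\ref{l_dp2} yields $\tl_{n-r+j}=\lambda_j\left(\mb_{22}-\mb_{12}^T(\mb_{11}-\tl_{n-r+j}\mi)^{-1}\mb_{12}\right)\geq\lambda_j(\mz_j)$, which is the claim; since $j$ was arbitrary it holds for $1\leq j\leq r$.

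There is no serious obstacle here, as the argument is essentially the Loewner-ordering version of the proof of Theorem~\ref{t_dp4}. The one point demanding care is the final paragraph: the $r\times r$ matrix whose $j$-th eigenvalue equals $\tl_{n-r+j}$ itself depends on $j$ through the shift, so the eigenvalue indices on the two sides of the Weyl inequality must be matched up correctly; and one must establish positive semi-definiteness of the cubic correction term $\mP_j$ (via commutativity of functions of $\mb_{11}$) \emph{before} invoking $0\leq\tl_{n-r+j}\leq\|\mb_{22}\|_2$, so that enlarging the scalar coefficient enlarges the matrix in the Loewner order rather than shrinking it.
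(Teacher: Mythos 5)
Your proof is correct and follows essentially the same route as the paper: Sherman--Morrison on $(\mb_{11}-\lambda_{n-r+j}(\mb)\,\mi)^{-1}$, the exact expression from Lemma~\ref{l_dp2}, the scalar bound $0\leq\lambda_{n-r+j}(\mb)\leq\|\mb_{22}\|_2$ from~(\ref{e_dp2}), and monotonicity of eigenvalues under the Loewner order. The one place you add value is that you explicitly justify $\mP_j\succeq\vzero$, a step the paper leaves implicit; a marginally quicker route than your commutativity argument is to observe directly that $\vx^T\mb_{11}^{-1}(\mi-\lambda_{n-r+j}(\mb)\,\mb_{11}^{-1})^{-1}\mb_{11}^{-1}\vx=(\mb_{11}^{-1}\vx)^T(\mi-\lambda_{n-r+j}(\mb)\,\mb_{11}^{-1})^{-1}(\mb_{11}^{-1}\vx)\geq 0$ since the middle factor is symmetric positive definite.
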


\begin{proof}
Abbreviate $\tl_{n-r+j}\equiv\lambda_{n-r+j}(\mb)$, $1\leq j\leq r$.
As in the proof of Theorem~\ref{t_dp4}, apply the Sherman-Morrison formula 
\cite[Section 2.1.4]{GovL13}
\begin{equation*}
(\mb_{11}-\tl_{n-r+j}\mi)^{-1}=\mb_{11}^{-1}+\tl_{n-r+j}\,\mb_{11}^{-1}(\mi-\tl_{n-r+j}\,
\mb_{11}^{-1})^{-1}\mb_{11}^{-1},
\end{equation*}
and substitute the above
into the expressions for
\begin{align}\label{e_dp6a}
\tl_{n-r+j}=\lambda_j(\mm_j), \qquad 1\leq j\leq r
\end{align}
from Lemma~\ref{l_dp2} where
\begin{align*}
\mm_j&\equiv \mb_{22}-\mb_{12}^T(\mb_{11}-\tl_{n-r+j}\mi)^{-1}\mb_{12}\\
&=\mb_{22}-\mb_{12}^T\mb_{11}^{-1}\mb_{12}-\tl_{n-r+j}\mb_{12}^T\mb_{11}^{-1}(\mi-\tl_{n-r+j}\mb_{11}^{-1})^{-1}\mb_{11}^{-1}\mb_{12}.
\end{align*}
From (\ref{e_dp2}) follows the Loewner bound
\begin{align*}
\mm_j\succeq
\mz_j\equiv\mb_{22}-\mb_{12}^T\mb_{11}^{-1}\mb_{12}-\|\mb_{22}\|_2\,\mb_{12}^T\mb_{11}^{-1}(\mi-\tl_{r+j}\mb_{11}^{-1})^{-1}\mb_{11}^{-1}\mb_{12}.
\end{align*}
This and (\ref{e_dp6a}) imply
 $\tl_{n-r+j}=\lambda_j(\mm_j)\geq \lambda_j(\mz_j)$, $1\leq j\leq r$ \cite[Corollary 7.7.4]{HoJoI}.
\end{proof}

The slightly weaker bound below extends Theorem~\ref{t_dp5} and focusses on a 'dominant'
part of $\mb_{11}$. This establishes the connection to
Theorem~\ref{t_det4}, where $\mb$
represents the perturbed matrix and 
the low order terms in $\mb_{11}$ are captured by $\mc_{11}$.

\begin{theorem}[Second lower bound]\label{t_dp7}
Let $\mb\in\rnn$ be symmetric positive semi-definite  with $\rank(\mb)\geq n-r$
for some $r\geq 1$, and partition
\begin{align*}
\mb=\begin{bmatrix}\mb_{11} & \mb_{12} \\ \mb_{12}^T & \mb_{22}\end{bmatrix} \qquad \text{where}\quad  \mb_{11}\in\real^{(n-r)\times (n-r)}, \quad \mb_{22}\in\real^{r\times r}.
\end{align*}
If $\mb_{11}=\mc_{11}+\mc_{12}$ where  $\mc_{11}$ is symmetric positive definite with
$\lmin(\mc_{11})>\|\mb_{22}\|_2$, and $\mc_{12}$
is symmetric positive semi-definite, then 
\begin{equation*}
\lambda_{n-r+j}(\mb)\geq\lambda_j\left(\mb_{22}-\mb_{12}^T\mc_{11}^{-1}\mb_{12}\right)
-\frac{\|\mb_{22}\|_2\,\|\mc_{11}^{-1}\mb_{12}\|_2^2}{1-\|\mb_{22}\|_2\,\|\mc_{11}^{-1}\|_2},
\qquad 1\leq j\leq r.
\end{equation*}
\end{theorem}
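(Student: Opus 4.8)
The plan is to build directly on Lemma~\ref{l_dp2a}, which already does the work of passing from $\mb$ to the auxiliary matrix $\widehat{\mb}$. Invoking it, I would abbreviate $\twh_{n-r+j}\equiv\lambda_{n-r+j}(\widehat{\mb})$ and $\mm_j\equiv\mb_{22}-\mb_{12}^T(\mc_{11}-\twh_{n-r+j}\mi)^{-1}\mb_{12}$, so that (\ref{e_dp2b}) and (\ref{e_dp2c}) give $\lambda_{n-r+j}(\mb)\geq\lambda_{n-r+j}(\widehat{\mb})=\lambda_j(\mm_j)$, while (\ref{e_dp2a}) gives $\twh_{n-r+j}\leq\|\mb_{22}\|_2<\lmin(\mc_{11})$, whence $\mi-\twh_{n-r+j}\,\mc_{11}^{-1}$ is symmetric positive definite. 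The one remaining task is to remove the dependence on the unknown $\twh_{n-r+j}$, exactly as in the proofs of Theorems~\ref{t_dp5} and~\ref{t_dp6}: I would apply the Sherman--Morrison formula \cite[Section~2.1.4]{GovL13}, $(\mc_{11}-\twh_{n-r+j}\mi)^{-1}=\mc_{11}^{-1}+\twh_{n-r+j}\,\mc_{11}^{-1}(\mi-\twh_{n-r+j}\,\mc_{11}^{-1})^{-1}\mc_{11}^{-1}$, and substitute it to write $\mm_j=(\mb_{22}-\mb_{12}^T\mc_{11}^{-1}\mb_{12})-\mr_j$ with $\mr_j\equiv\twh_{n-r+j}\,(\mc_{11}^{-1}\mb_{12})^T(\mi-\twh_{n-r+j}\,\mc_{11}^{-1})^{-1}(\mc_{11}^{-1}\mb_{12})$.

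The observation that makes the argument work is that $\mr_j$ carries the sign of the scalar $\twh_{n-r+j}$, because $(\mi-\twh_{n-r+j}\,\mc_{11}^{-1})^{-1}$ is symmetric positive definite. Set $c\equiv\|\mb_{22}\|_2\,\|\mc_{11}^{-1}\mb_{12}\|_2^2\,\big(1-\|\mb_{22}\|_2\,\|\mc_{11}^{-1}\|_2\big)^{-1}$, which is nonnegative because $\|\mb_{22}\|_2<\lmin(\mc_{11})=1/\|\mc_{11}^{-1}\|_2$. If $\twh_{n-r+j}<0$, then $\mr_j$ is negative semi-definite, so $-\mr_j\succeq-c\,\mi$ trivially and the correction term is not even needed. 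If $0\leq\twh_{n-r+j}\leq\|\mb_{22}\|_2$, then $\mr_j$ is positive semi-definite, and using submultiplicativity of the two-norm together with the bound $\|(\mi-\twh_{n-r+j}\,\mc_{11}^{-1})^{-1}\|_2\leq\big(1-\|\mb_{22}\|_2\,\|\mc_{11}^{-1}\|_2\big)^{-1}$ already established inside the proof of Lemma~\ref{l_dp2a}, I get $\|\mr_j\|_2\leq\twh_{n-r+j}\,\|\mc_{11}^{-1}\mb_{12}\|_2^2\,\|(\mi-\twh_{n-r+j}\,\mc_{11}^{-1})^{-1}\|_2\leq c$, hence $\mr_j\preceq c\,\mi$. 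In either case $\mm_j\succeq(\mb_{22}-\mb_{12}^T\mc_{11}^{-1}\mb_{12})-c\,\mi$, so Weyl's monotonicity theorem \cite[Corollary~4.3.3]{HoJoI} yields $\lambda_j(\mm_j)\geq\lambda_j\big(\mb_{22}-\mb_{12}^T\mc_{11}^{-1}\mb_{12}\big)-c$, and chaining this with $\lambda_{n-r+j}(\mb)\geq\lambda_j(\mm_j)$ gives the claimed bound.

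The main obstacle --- and the reason this is not a verbatim copy of the $r=1$ argument in Theorem~\ref{t_dp5} --- is that $\widehat{\mb}$ need not be positive semi-definite, so $\twh_{n-r+j}$ need not be nonnegative; in the scalar case the inequality $0\leq\lmin(\mb)$ came for free from positive semi-definiteness of $\mb$ via~(\ref{e_dp1}), but that is unavailable here. The case split above is exactly what compensates: one notices that a negative $\twh_{n-r+j}$ can only help, since it makes $\mr_j$ negative semi-definite, while in the complementary range $\twh_{n-r+j}\in[0,\|\mb_{22}\|_2]$ the routine Sherman--Morrison / Banach-type estimate goes through unchanged.
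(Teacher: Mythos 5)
Your proof is correct and follows essentially the same route as the paper's: invoke Lemma~\ref{l_dp2a} to pass to $\widehat{\mb}$, apply Sherman--Morrison to peel off the $\mb_{22}-\mb_{12}^T\mc_{11}^{-1}\mb_{12}$ term, bound the remainder by a scalar multiple of $\mi$ using the Banach-type estimate from (\ref{e_dp2d}), and finish with Loewner/Weyl monotonicity. The one small difference is stylistic: you split on the sign of $\twh_{n-r+j}$, whereas the paper avoids an explicit case split in this proof by observing that the core matrix $\mb_{12}^T\mc_{11}^{-1}(\mi-\twh_{n-r+j}\mc_{11}^{-1})^{-1}\mc_{11}^{-1}\mb_{12}$ is positive semi-definite, so the scalar replacement $\twh_{n-r+j}\leq\|\mb_{22}\|_2$ already gives a Loewner inequality $\twh_{n-r+j}\,\mp\preceq\|\mb_{22}\|_2\,\mp$ regardless of sign; the sign analysis is then absorbed once and for all into the proof of (\ref{e_dp2d}) in Lemma~\ref{l_dp2a} rather than repeated here. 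Both arguments are valid and equivalent in substance.
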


\begin{proof}
Define
\begin{align*}
\widehat{\mb}\equiv \begin{bmatrix} \mc_{11} & \mb_{12}\\
\mb_{12}^T & \mb_{22}\end{bmatrix},
\end{align*}
and abbreviate $\twh_{n-r+j}\equiv\lambda_{n-r+j}(\widehat{\mb})$, $1\leq j\leq r$.
From (\ref{e_dp2c}) in Lemma~\ref{l_dp2a} follows
\begin{align*}
\lambda_{n-r+j}(\mb)\geq \twh_{n-r+j}=
\lambda_j\left(\mb_{22}-\mb_{12}^T(\mc_{11}-
\twh_{n-r+j}\,\mi)^{-1}\mb_{12}\right),
\quad 1\leq j\leq r,\end{align*}
We proceed as in the proof of Theorem~\ref{t_dp6}, and
apply the Sherman-Morrison formula  \cite[Section 2.1.4]{GovL13},
\begin{equation*}
(\mc_{11}-\twh_{n-r+j}\mi)^{-1}=
\mc_{11}^{-1}+\twh_{n-r+j}\,\mc_{11}^{-1}(\mi-\twh_{n-r+j}\,
\mc_{11}^{-1})^{-1}\mc_{11}^{-1},
\end{equation*}
 and  (\ref{e_dp2a}) to the expression for
\begin{align}\label{e_dp7a}
\twh_{n-r+j}=\lambda_j(\mm_j), \qquad 1\leq j\leq r,
\end{align}
from Lemma~\ref{l_dp2a}, where
\begin{align*}
\mm_j&\equiv
 \mb_{22}-\mb_{12}^T(\mc_{11}-\twh_{n-r+j}\mi)^{-1}\mb_{12}, \qquad 1\leq j\leq r\\
&=\mb_{22}-\mb_{12}^T\mc_{11}^{-1}\mb_{12}-\tl_{n-r+j}\mb_{12}^T\mc_{11}^{-1}(\mi-\tl_{n-r+j}
\mc_{11}^{-1})^{-1}\mc_{11}^{-1}\mb_{12}.
\end{align*}
From (\ref{e_dp2a}) and (\ref{e_dp2d}) follows the lower bound
\begin{align*}
\mm_j&\succeq \mb_{22}-\mb_{12}^T\mc_{11}^{-1}\mb_{12}-\|\mb_{22}\|_2\,\mb_{12}^T\mc_{11}^{-1}
(\mi-\tl_{r+j}\mc_{11}^{-1})^{-1}\mc_{11}^{-1}\mb_{12}\\
&\succeq \mz\equiv \mb_{22}-\mb_{12}^T\mc_{11}^{-1}\mb_{12}-
\underbrace{\frac{\|\mb_{22}\|_2\,\|\mc_{11}^{-1}\mb_{12}\|_2^2}{1-\|\mb_{22}\|_2\,\|\mc_{11}^{-1}\|_2}}_{\gamma} \mi, \qquad 1\leq j\leq r.
\end{align*}
Thus, $\mm_j\succeq \mz$, $1\leq j\leq r$.
 The Loewner properties \cite[Corollary 7.7.4]{HoJoI} imply 
 the same for the eigenvalues, $\lambda_j(\mm_j)\geq \lambda_j(\mz)$, $1\leq j\leq r$.
 Combine this with the well conditioning
 of eigenvalues \cite[Theorem 8.1.5]{GovL13},
\begin{align*}
\tl_{n-r+j}=\lambda_j(\mm_j)\geq \lambda_j(\mz)\geq \lambda_j(\mb_{22}-\mb_{12}^T\mb_{11}^{-1}\mb_{12})-\gamma,\qquad 1\leq j\leq r.
\end{align*}
\end{proof}

Theorem~\ref{t_dp7} reduces to Theorem~\ref{t_dp5} for $r=1$, and to Theorem~\ref{t_dp6} for 
$\mc_{12}=\vzero$.

\subsection{A lower bound for a cluster of smallest singular values}\label{s_clb}
We extend the bound for a single smallest singular value in section~\ref{s_slb} to a cluster of smallest singular values.
The resulting  lower bound for the cluster of
perturbed smallest singular values (Theorem~\ref{t_det4}) is based on the eigenvalue bounds in section~\ref{s_cev}, 
and expressed  in terms of normwise absolute perturbations.
We start with a summary of all assumptions (Assumptions~\ref{ass_2}), and end with a 
discussion of their generality (Remark~\ref{r_det4}).

\begin{assumptions}\label{ass_2}
Let $\ma\in\rmn$ with $m\geq n$ have $\rank(\ma)\geq n-r$ for some $r\geq 1$.
Let $\ma=\mU\msig\mv^T$ be a full singular value decomposition,
where $\msig\in\rmn$ is diagonal, and
$\mU\in\rmm$ and $\mv\in\rnn$ are orthogonal matrices. 
Partition commensurately,
\begin{align*}
\msig=\begin{bmatrix} \msig_1 & \vzero \\ \vzero & \msig_2\\ \vzero & \vzero \end{bmatrix},\qquad 
\me=\mU\begin{bmatrix} \me_{11} & \me_{12} \\ \me_{21} & \me_{22}\\ \me_{31}& \me_{32} \end{bmatrix}\mv^T, 
\end{align*}
where $\msig_1\in\real^{(n-r)\times (n-r)}$ is nonsingular diagonal, 
and $\msig_2\in\real^{r\times r}$ is diagonal.
\end{assumptions}

This bound below extends Theorem~\ref{t_det3}, and reduces to it for $r=1$.

\begin{theorem}\label{t_det4}
Let $\ma,\me\in\rmn$ satisfy Assumptions~\ref{ass_2}.
If $1/\|\msig_{1}^{-1}\|_2>4\|\me\|_2$ and $\|\msig_2\|_2<\|\me\|_2$,
then 
\begin{align*}
\sigma_{n-r+j}(\ma+\me)^2\geq\lambda_j(\me_{32}^T\me_{32}+(\msig_2+\me_{22})^T(\msig_2+\me_{22}) -\mr_3)-r_4,\quad 1\leq j\leq r,
\end{align*}
where $\mr_3$ contains terms of order 3
\begin{align*}
\mr_3\equiv \me_{12}^T\mw+\mw^T\me_{12},\qquad 
\mw\equiv(\msig_1+\me_{11})^{-T}\begin{bmatrix}\me_{21}& \me_{31}^T\end{bmatrix}
\begin{bmatrix} \msig_2+\me_{22}\\ \me_{32}\end{bmatrix}
\end{align*}
and $r_4$ contains terms of order 4 and higher,
\begin{align*}
r_4\equiv \|\mw\|_2^2
+4\frac{\|\me\|_2^2\,\|(\msig_1+\me_{11})^{-1}(\me_{12}+\mw)\|_2^2}
{1-4\|\me\|_2^2 \|(\msig_1+\me_{11})^{-1}\|_2^2}.
\end{align*}
\end{theorem}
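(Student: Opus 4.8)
The plan is to mimic the structure of the proof of Theorem~\ref{t_det3} exactly, replacing the scalar quantities $\smin$, $e_{22}$, $\ve_{12}$, $\vw$, $\beta$, $\vb$ by their block analogues $\msig_2$, $\me_{22}$, $\me_{12}$, $\mw$, $\mb_{22}$, $\mb_{12}$, and swapping out the single-eigenvalue auxiliary result (Theorem~\ref{t_dp5}) for its clustered counterpart (Theorem~\ref{t_dp7}). Concretely, I would square the singular values and write
\begin{align*}
\mb\equiv(\ma+\me)^T(\ma+\me)=\mv\begin{bmatrix}\mb_{11}&\mb_{12}\\\mb_{12}^T&\mb_{22}\end{bmatrix}\mv^T,
\end{align*}
and then, expanding the product $(\msig+\mU^T\me\mv)^T(\msig+\mU^T\me\mv)$ block by block, identify
\begin{align*}
\mb_{11}&=\underbrace{(\msig_1+\me_{11})^T(\msig_1+\me_{11})}_{\mc_{11}}+\underbrace{\me_{21}^T\me_{21}+\me_{31}^T\me_{31}}_{\mc_{12}},\\
\mb_{22}&=(\msig_2+\me_{22})^T(\msig_2+\me_{22})+\me_{12}^T\me_{12}+\me_{32}^T\me_{32},\\
\mb_{12}&=(\msig_1+\me_{11})^T\me_{12}+\me_{21}^T(\msig_2+\me_{22})+\me_{31}^T\me_{32}.
\end{align*}
Here $\mc_{11}$ is symmetric positive definite under the hypothesis $1/\|\msig_1^{-1}\|_2>4\|\me\|_2$, and $\mc_{12}$ is symmetric positive semi-definite and purely second order, so the splitting $\mb_{11}=\mc_{11}+\mc_{12}$ is exactly the one Theorem~\ref{t_dp7} wants.

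Step one is to verify the separation hypothesis $\|\mb_{22}\|_2<\lmin(\mc_{11})$ of Theorem~\ref{t_dp7}. For the upper bound on $\|\mb_{22}\|_2$, I note that $\mb_{22}$ is the Gram matrix of the bottom block of $\msig+\mU^T\me\mv$, so $\|\mb_{22}\|_2=\|[\ (\msig_2+\me_{22})^T\ \ \me_{12}^T\ \ \me_{32}^T\ ]^T\|_2^2\le(\|\msig_2\|_2+\|\me\ve_n\|_2)^2\le(\|\msig_2\|_2+\|\me\|_2)^2<4\|\me\|_2^2$, using $\|\msig_2\|_2<\|\me\|_2$ — this is the block version of~(\ref{e_aux2}). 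For the lower bound on $\lmin(\mc_{11})=\smin(\msig_1+\me_{11})^2$, Weyl's inequality gives $\smin(\msig_1+\me_{11})\ge\smin(\msig_1)-\|\me\|_2>3\|\me\|_2$, so $\lmin(\mc_{11})>9\|\me\|_2^2>4\|\me\|_2^2\ge\|\mb_{22}\|_2$. Step two is to feed this into Theorem~\ref{t_dp7}, obtaining
\begin{align*}
\sigma_{n-r+j}(\ma+\me)^2=\lambda_{n-r+j}(\mb)\ge\lambda_j(\mb_{22}-\mb_{12}^T\mc_{11}^{-1}\mb_{12})-\frac{\|\mb_{22}\|_2\,\|\mc_{11}^{-1}\mb_{12}\|_2^2}{1-\|\mb_{22}\|_2\,\|\mc_{11}^{-1}\|_2},
\end{align*}
and then simplifying the two pieces. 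For the Schur-complement piece, the key computation is the block analogue of~(\ref{eqn:pd01})--(\ref{e_c11b}): $(\msig_1+\me_{11})^{-T}\mb_{12}=\me_{12}+\mw$, whence $\mb_{12}^T\mc_{11}^{-1}\mb_{12}=(\me_{12}+\mw)^T(\me_{12}+\mw)=\me_{12}^T\me_{12}+\me_{12}^T\mw+\mw^T\me_{12}+\mw^T\mw$, and therefore
\begin{align*}
\mb_{22}-\mb_{12}^T\mc_{11}^{-1}\mb_{12}=\me_{32}^T\me_{32}+(\msig_2+\me_{22})^T(\msig_2+\me_{22})-\mr_3-\mw^T\mw,
\end{align*}
the block mirror of~(\ref{e_aux30a}). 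For the correction piece, substitute the bound $\|\mb_{22}\|_2<4\|\me\|_2^2$, the identity $\mc_{11}^{-1}\mb_{12}=(\msig_1+\me_{11})^{-1}(\me_{12}+\mw)$, and $\|\mc_{11}^{-1}\|_2=\|(\msig_1+\me_{11})^{-1}\|_2^2$ to get the bound $4\|\me\|_2^2\|(\msig_1+\me_{11})^{-1}(\me_{12}+\mw)\|_2^2/(1-4\|\me\|_2^2\|(\msig_1+\me_{11})^{-1}\|_2^2)$.

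The last step is to absorb the $-\mw^T\mw$ inside the eigenvalue into the scalar correction: since $\mw^T\mw\succeq\vzero$, Weyl's monotonicity theorem gives $\lambda_j(\,\me_{32}^T\me_{32}+(\msig_2+\me_{22})^T(\msig_2+\me_{22})-\mr_3-\mw^T\mw\,)\ge\lambda_j(\,\me_{32}^T\me_{32}+(\msig_2+\me_{22})^T(\msig_2+\me_{22})-\mr_3\,)-\|\mw\|_2^2$, and combining with the correction piece produces exactly $r_4=\|\mw\|_2^2+4\|\me\|_2^2\|(\msig_1+\me_{11})^{-1}(\me_{12}+\mw)\|_2^2/(1-4\|\me\|_2^2\|(\msig_1+\me_{11})^{-1}\|_2^2)$, which is the claimed bound. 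I expect the only real subtlety — as opposed to bookkeeping — to be making sure the Loewner/Weyl manipulations are applied in the right direction: one wants $-\mr_3$ to stay \emph{inside} the $\lambda_j(\cdot)$ (it is indefinite and cannot be pulled out by eigenvalue perturbation without losing sharpness), while $-\mw^T\mw$ \emph{must} be pulled out because it is the piece that combines cleanly with the fourth-order correction term; verifying that Theorem~\ref{t_dp7} indeed keeps $\mb_{22}-\mb_{12}^T\mc_{11}^{-1}\mb_{12}$ intact inside $\lambda_j$ is what makes this split legitimate. Everything else is a faithful blockwise transcription of the $r=1$ argument.
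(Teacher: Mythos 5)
Your proposal is correct and follows essentially the same route as the paper's own proof: the same block decomposition of $(\ma+\me)^T(\ma+\me)$, the same splitting $\mb_{11}=\mc_{11}+\mc_{12}$, the same verification that $\lmin(\mc_{11})>\|\mb_{22}\|_2$, the same application of Theorem~\ref{t_dp7} and the key identity $(\msig_1+\me_{11})^{-T}\mb_{12}=\me_{12}+\mw$, and the same final use of Weyl's inequality to extract $-\|\mw\|_2^2$ from the Schur complement. (The intermediate $\|\me\ve_n\|_2$ in your bound for $\|\mb_{22}\|_2$ is a leftover from the $r=1$ case — for a cluster it should be the norm of $\me$ restricted to the last $r$ columns — but it is still $\le\|\me\|_2$, so the chain of inequalities is unaffected.)
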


\begin{proof}
We square the singular values of $\ma+\me$ and consider the eigenvalues of the perturbed
matrix
\begin{align*}
\mb\equiv (\ma+\me)^T(\ma+\me)=\mv\begin{bmatrix} \mb_{11} & \mb_{12} \\ \mb_{12}^T & \mb_{22}\end{bmatrix}\mv^T
\end{align*}
where
\begin{align}
\mb_{11}&=\underbrace{(\msig_1+\me_{11})^T(\msig_1+\me_{11})}_{\mc_{11}}
+\underbrace{\me_{21}^T\me_{21}+\me_{31}^T\me_{31}}_{\mc_{12}}\label{e_det4aa}\\
\begin{split}\label{e_det4a}
\mb_{22}&=\me_{12}^T\me_{12}+(\msig_{2}+\me_{22})^T(\msig_{2}+\me_{22})
+\me_{32}^T\me_{32}\\
\mb_{12}& = (\msig_1+\me_{11})^T\me_{12}+\me_{21}^T(\msig_{2}+\me_{22}) +\me_{31}^T\me_{32}.
\end{split}
\end{align}
From $\smin(\msig_1)> 4\|\me\|_2$ follows that
$\mc_{11}$ is  symmetric positive definite, while $\mc_{12}$ is symmetric positive semi-definite and contains only second order terms.
Abbreviate $\tl_{n-r+j}\equiv \lambda_{n-r+j}(\mb)=\sigma_{n-r+j}(\ma+\me)^2$, $1\leq j\leq r$.

The proof proceeds in two steps:
\begin{enumerate}
\item Confirming that  $\mc_{11}$ satisfies the assumptions of Theorem~\ref{t_dp7}.
\item Deriving the lower bounds for $\tl_{n-r+j}$ from Theorem~\ref{t_dp7}.
\end{enumerate}
\smallskip

\paragraph{1. Confirm that  $\mc_{11}$ satisfies the assumptions of Theorem~\ref{t_dp7}}
We show that   $\lmin(\mc_{11})>\|\mb_{22}\|_2$, by bounding $\|\mb_{22}\|_2$ from above
and $\lmin(\mc_{11})$ from below.

Regarding the  upper bound for $\|\mb_{22}\|_2$, the expression 
for $\mb_{22}$ in (\ref{e_det4a}) and 
the assumption $\|\msig_2\|_2<\|\me\|_2$ imply
\begin{align}\label{e_aux2c}
\|\mb_{22}\|_2=\left\|\begin{bmatrix} \me_{12}^T & (\me_{22}+\msig_2)^T& \me_{32}^T\end{bmatrix}^T \right\|_2^2
\leq (\|\msig_2\|_2+\|\me\|_2)^2\leq 4\|\me\|_2^2.
\end{align}
Regarding the lower bound for $\lmin(\mc_{11})$, view
$\mc_{11}=(\msig_1+\me_{11})^T(\msig_1+\me_{11})$
as a singular value problem,
so that $\lmin(\mc_{11})=\smin(\msig_1+\me_{11})^2$.
The well-conditioning of singular values \cite[Corollary 8.6.2]{GovL13} implies
\begin{equation*}
|\smin(\msig_1+\me_{11})-\smin(\msig_1)|\leq \|\me_{11}\|_2\leq \|\me\|_2.
\end{equation*}
Adding the assumption  
$\smin(\msig_1)=1/\|\msig_1^{-1}\|_2>4\|\me\|_2$ gives
\begin{equation*}
\smin(\msig_1+\me_{11})\geq \smin(\msig_1)-\|\me\|_2
>4\,\|\me\|_2-\|\me\|_2=3\|\me\|_2.
\end{equation*}
Now combine this lower bound for $\lmin(\mc_{11})$ with (\ref{e_aux2c}),
\begin{equation*}
\lmin(\mc_{11})=\smin(\msig_1+\me_{11})^2>9\|\me\|_2^2>4\|\me\|_2^2\geq \|\mb_{22}\|_2.
\end{equation*}
Hence $\lmin(\mc_{11})>\|\mb_{22}\|_2$, and $\mc_{11}$ satisfies the assumptions of Theorem~\ref{t_dp5}.
\smallskip

\paragraph{2. Derive the  lower bounds for $\tl_{n-r+j}$ from Theorem~\ref{t_dp7}}
In these bounds,
\begin{align}\label{e_aux40}
\tl_{n-r+j}\geq\lambda_j(\ms)
-\frac{\|\mb_{22}\|_2\,\|\mc_{11}^{-1}\mb_{12}\|_2^2}{1-\|\mb_{22}\|_2\,\|\mc_{11}^{-1}\|_2},\qquad
\ms\equiv \mb_{22}-\mb_{12}^T\mc_{11}^{-1}\mb_{12},
\end{align}
the key term is $\mc_{11}^{-1}\mb_{12}$. 
Insert the expression for $\mb_{12}$ from (\ref{e_det4a}),
\begin{align}
(\msig_1+\me_{11})^{-T}\mb_{12}&=
\me_{12}+(\msig_1+\me_{11})^{-T}\left(\me_{21}^T(\msig_2+\me_{22})
+\me_{31}^T\me_{32}\right)\nonumber\\
&=\me_{12}+\mw.\label{eqn:pd02}
\end{align}
Combine the expression for $\mc_{11}$ from (\ref{e_det4aa})
with the above,
\begin{align}\label{e_c11b2}
\mc_{11}^{-1}\mb_{12}&=(\msig_1+\me_{11})^{-1}
\underbrace{(\msig_1+\me_{11})^{-T}\mb_{12}}_{\me_{12}+\mw}=
(\msig_1+\me_{11})^{-1}(\me_{12}+\mw)
\end{align}
Multiply the above by $\mb_{12}^T$ on the left,
and use (\ref{eqn:pd02}),
\begin{align*}
\mb_{12}^T\mc_{11}^{-1}\mb_{12}&=
\mb_{12}^T(\msig_1+\me_{11})^{-1}(\msig_1+\me_{11})^{-T}\mb_{12}\\
&=(\me_{12}+\mw)^T(\me_{12}+\mw)=\me_{12}^T\me_{12}+\me_{12}^T\mw+\mw^T\me_{12}+\mw^T\mw.
\end{align*}
Substitute the above, and $\mb_{22}$ from (\ref{e_det4a}) into $\ms$
from (\ref{e_aux40}),
\begin{align*}
\ms&=
\me_{12}^T\me_{12}+(\msig_{2}+\me_{22})^T(\msig_{2}+\me_{22})+\me_{32}^T\me_{32}\\
&\qquad\qquad\qquad -
(\me_{12}^T\me_{12}+\me_{12}^T\mw+\mw^T\me_{12}+\mw^T\mw)\\
&=\me_{32}^T\me_{32}+(\msig_{2}+\me_{22})^T(\msig_{2}+\me_{22})
-\mr_3-\mw^T\mw.
\end{align*}
The well conditioning of eigenvalues \cite[Theorem 8.1.5]{GovL13} implies
\begin{align}\label{e_aux40a}
\lambda_j(\ms)
\geq \lambda_j(\me_{32}^T\me_{32}+(\msig_{2}+\me_{22})^T(\msig_{2}+\me_{22})
-\mr_3)-\|\mw\|_2^2.
\end{align}
Substitute the bound for $\|\mb_{22}\|_2$ from (\ref{e_aux2c}),
and (\ref{eqn:pd02}) into the second summand of~(\ref{e_aux40}),
\begin{align}\label{e_aux40b}
\frac{\|\mb_{22}\|_2\,\|\mc_{11}^{-1}\mb_{12}\|_2^2}{1-\|\mb_{22}\|_2\,\|\mc_{11}^{-1}\|_2}
\leq 4\frac{\|\me\|_2^2\,\|(\msig_1+\me_{11})^{-1}(\me_{12}+\mw)\|_2^2}
{1-4\|\me\|_2^2\|(\msig_1+\me_{11})^{-1}\|_2^2}.
\end{align}
At last insert (\ref{e_aux40a}) and (\ref{e_aux40b}) into (\ref{e_aux40}).
\end{proof}

\begin{remark}\label{r_det4}
The assumptions in Theorem~\ref{t_det4} are not restrictive.
Only a small gap of $3\|\me\|_2$ is required to 
separate the small singular value cluster of $\ma$ from the remaining singular values,
\begin{align*}
\|\msig_2\|_2<\|\me\|_2<4\|\me\|_2\leq 1/\|\msig_1^{-1}\|_2.  
\end{align*}
\end{remark} 
\section{Numerical experiments}\label{s_exp}
We present numerical experiments to illustrate that downcasting
to lower precision can increase small singular values, thus confirming 
that our bounds in sections~\ref{s_single} and~\ref{s_cluster}
are informative models for the effects of reduced arithmetic
precision.

After describing the algorithms for computing
the singular values (Section~\ref{s_setup}), we present
the numerical experiments (Section~\ref{s_results}).

\subsection{Generation and computation of singular values}\label{s_setup}
The code for the numerical experiments consists of two algorithms:  
Algorithm~\ref{al2} in Appendix~\ref{appen}
generates the diagonal matrix $\msig$ containing the exact singular values, while Algorithm~\ref{al1} generates the matrix~$\ma\in\rmn$ from $\msig$ in double precision, and 
then computes the singular values of $\ma$ and those of its 
lower precision versions $\mathtt{single}(\ma)$ and 
$\mathtt{half}(\ma)$. We use \texttt{Julia} programming language for our computations. The scripts for reproducing the numerical experiment are published in our git 
repository\footnote{\url{https://github.com/cboutsikas/small_sigmas_increase.git}}.

The $n$ singular values in the diagonal matrix $\msig$ generated by Algorithm~\ref{al2}
consist of two clusters: a cluster $\msig_1$ of large singular values, and a cluster $\msig_2$ of small singular values.
Each cluster is defined by the following input parameters:
the number of singular values; the smallest and largest singular value; and the gap between the two clusters. 
Specifically, cluster $\msig_{1}$ consists of $k_1>0$ singular values, the largest 
one being $10^{s_{1}}$ and the smallest one being $10^{s_1 - d_1}$. Here
$d_1\geq 0$ controls the distance
between smallest and largest singular value. If $d_1>0$ and $k_1>2$, then the interior singular values 
of $\msig_1$ are
sampled uniformly at random in the
interval $[10^{s_1-d_1}, 10^{s_1}]$.

The parameter $g$ controls the gap between the clusters, which is set to $10^g$.
Cluster $\msig_2$ consists of $k_2\equiv n-k_1\geq 0$ singular values,
the largest one being $10^{s_{1}-d_1-g}$, 
and the smallest one being $10^{s_1-d_1-g-d_2}$,
where $d_2\geq 0$ controls the distance between the smallest and largest singular value
in cluster $\msig_2$.
If $d_2>0$ and $k_2>2$, then the interior singular values of $\msig_2$ are
sampled uniformly at random in the
interval $[10^{s_1-d_1-g-d_2}, 10^{s_1-d_1-g}]$.

\subsection{Numerical results and discussion}\label{s_results}
We present numerical experiments that
corroborate bounds in sections \ref{s_single} and~\ref{s_cluster}. 
Our experiments are performed on matrices $\ma \in \real^{m \times n}$ with $\rank(\ma)=n$, 
$m = 4,096$ and $n=256$. We emphasize that changing the matrix dimensions while keeping the aspect ratio $m/n$ fixed does not change our conclusions. 

Figures \ref{fig4.1}--\ref{fig4.4} show the exact singular values as well as the singular values computed in double precision, which turn out to be identical in all cases.
In addition, Figures~\ref{fig4.1} and~\ref{fig4.3} show the singular values computed in \textit{single precision}, while Figures~\ref{fig4.2} and~\ref{fig4.4} show the singular values computed in \textit{half precision}. 

To guarantee that our empirical evaluations satisfy Assumptions~\ref{ass_1} and~\ref{ass_2},
we compute the singular values with the Golub-Kahan Bi-Diagonalization\footnote{\texttt{Julia} computes the SVD with the LAPACK routine \texttt{dgesvd()}, which employs the Bi-Diagonalization method.} algorithm~\cite{golub1965calculating} in the respective precision. As in~\cite[Algorithm 8.6.2]{GovL13} we assume that
$$\mU^{T} \ma \mv = \msig + \me, $$ 
satisfies $\| \me \|_{2} \approx u\| \ma \|_{2}$ , where $u$ is the unit roundoff 
\cite[section]{GovL13}, which
depends on the underlying precision\footnote{In double precision (binary64), $u= 2^{-53} \approx 1.11\times 10^{-16}$; in single precision (binary32), $u = 2^{-24} \approx 5.96 \times 10^{-8}$; and in half precision (binary16) $u = 2^{-11} \approx 4.88 \times 10^{-4}$.}. Thus, we can express the assumption in Remark~\ref{r_det4} as follows:
\begin{align}\label{eqn:aspd}
\sigma_{n-r+1} = \|\msig_2\|_2 \ \lesssim \ \sigma_{\max}\,u \ \lesssim \ 4 \,\sigma_{\max} u
\ \lesssim 1/\|\msig_1^{-1}\|_2 = \sigma_{n-r},
\end{align}
for some $r\geq 1$. Table~\ref{table4.1} shows the increase in the smallest singular value for $r=1$; and the average increase in the $r$ smallest singular values $r>1$. Since we describe a qualitative model, an increase can be observed even when the assumptions are not satisfied. We provide demonstrations of two such cases in Appendix~\ref{appen_B}.
%

\begin{table}[H]
\centering
\begin{tabular}{|l|l|l|l|l|l|l|}
\hline
 &$\min(\msig)$ & $\min(\msig^{d})$  & $\min(\msig^{s})$ & $\texttt{avg}(\msig^{s}_2)$  & $\min(\msig^{h})$ & $\texttt{avg}(\msig^{h}_2)$  \\
  \hline
Fig. \ref{fig4.1} & $10^{-7}$  & $10^{-7}$  & $6 \times 10^{-7}$ & $5 \times 10^{-7}$  & N/A & N/A  \\
 \hline
 Fig. \ref{fig4.2} & $10^{-3}$  & $10^{-3}$  & N/A & N/A  & $4 \times 10^{-3}$  & $3 \times 10^{-3}$  \\
  \hline
 Fig. \ref{fig4.3} & $10^{-5}$  & $10^{-5}$  & $6 \times 10^{-4}$ & $4 \times 10^{-4}$  & N/A & N/A \\
  \hline
 Fig. \ref{fig4.4} & $10^{-4}$  & $10^{-4}$  & N/A & N/A  & $8 \times 10^{-3}$ & $6 \times 10^{-3}$ \\
  \hline
\end{tabular}
\caption{Smallest singular values in Figures \ref{fig4.1}--\ref{fig4.4}: 
exact ($\msig$); double precision ($\msig^d$); single precision ($\msig^s$); 
and half precision ($\msig^h$). The quantities $\texttt{avg}(\msig^{s}_2)$ and $\texttt{avg}(\msig^{h}_2)$ represent the average increase of the $r$ smallest singular values for single and half precision.}
\label{table4.1}
\end{table}

\subsubsection{A single smallest singular value}\label{s_esingle}
We illustrate that downcasting to lower precision
can increase the smallest singular value, thus confirming that
Theorem~\ref{t_det3} represents a proper qualitative model
for the effects of reduced precision.
In Figures~\ref{fig4.1} and \ref{fig4.2}, the small singular value cluster $\msig_2$ 
consists of a single singular value, while the large singular value cluster $\msig_1$ contains 255 singular values.

\paragraph{Figure~\ref{fig4.1}}
The cluster $\msig_1$ contains 255 distinct singular values in the interval $[10^{-4}, 10^2]$, while $\msig_2$ contains the single singular value $10^{-7}$. The values in Assumption~\ref{ass_1} and (\ref{eqn:aspd}) are
\begin{align*}
    \underbrace{\sigma_{n}}_{10^{-7}}\  \lesssim\  \underbrace{\sigma_{\max}\, u}_{6 \times 10^{-6}} \ \lesssim \ \underbrace{4\,\sigma_{\max}\,u}_{2.4 \times 10^{-5}} \ \lesssim\  
    \underbrace{\sigma_{n-1}}_{10^{-4}}.
\end{align*}
In single precision, the smallest singular value has increased by almost $5\times 10^{-7}$.

\paragraph{Figure~\ref{fig4.2}}
The cluster $\msig_1$ contains 255 distinct singular values
in the interval $[10^{-1}, 10^1]$, while $\msig_2$ contains the single singular value $10^{-3}$.
The values in Assumption~\ref{ass_1} and~(\ref{eqn:aspd})) are
\begin{align*}
    \underbrace{\sigma_{n}}_{10^{-3}}\  \lesssim\  \underbrace{\sigma_{\max}\, u}_{5 \times 10^{-3}} \ \lesssim\  \underbrace{4\, \sigma_{\max}\, u}_{2 \times 10^{-2}} \ \lesssim\ \underbrace{\sigma_{n-1}}_{10^{-1}}.
\end{align*}
In half precision, the smallest singular value has increased
to $4\times 10^{-3}$.

\subsubsection{A cluster of small singular values}\label{s_ecluster}
We illustrate that downcasting to lower precision
can increase the values of the  cluster of small singular values, thus
confirming that
 Theorem~\ref{t_det4} represents a proper qualitative model for
 the effects of reduced precision.
In Figures~\ref{fig4.3} and \ref{fig4.4}, 
the small singular value cluster $\msig_2$ contains 28 singular values, 
while the large singular value cluster $\msig_1$ contains 228 singular values.

\paragraph{Figure~\ref{fig4.3}}
The cluster $\msig_1$ contains 228 distinct singular values
in the interval $[10^{-1}, 10^5]$, while $\msig_2$ contains 28 singular values
in the interval $[10^{-5}, 10^{-3}]$. 
The values in Assumption~\ref{ass_1} and ~(\ref{eqn:aspd}) are
\begin{align*}
    \underbrace{\sigma_{n-r}}_{10^{-3}} \ \lesssim\ \underbrace{\sigma_{\max}\, u}_{6 \times 10^{-3}} \ \lesssim\  \underbrace{4\, \sigma_{\max}\, u}_{2.4 \times 10^{-2}}\  \lesssim\  \underbrace{\sigma_{n-r+1}}_{10^{-1}}.
\end{align*}
In single precision, the smallest singular value of $\msig_2$ has increased to $6 \times 10^{-4}$, with an average increase of $4 \times 10^{-4}$ for the $r$ smallest singular values.

\paragraph{Figure~\ref{fig4.4}}
The cluster $\msig_1$ contains 228 distinct singular values
in the interval $[10^{0}, 10^2]$, while $\msig_2$ contains 28 singular values
in the interval $[10^{-4}, 10^{-2}]$. 
The values in Assumption~\ref{ass_1} and~(\ref{eqn:aspd}) are
\begin{align*}
    \underbrace{\sigma_{n-r}}_{10^{-2}}\ \lesssim\  \underbrace{\sigma_{\max}\, u}_{5 \times 10^{-2}} \ \lesssim\  \underbrace{4\, \sigma_{\max}\, u}_{10^{-1}} \ \lesssim\  \underbrace{\sigma_{n-r+1}}_{10^{0}}.
\end{align*}
In half precision, the smallest singular value of $\msig_2$ has increased to $8 \times 10^{-3}$, with an average increase of $7\times 10^{-3}$ for the $r$ smallest singular values.

\begin{figure}[ht!]
\includegraphics[width=0.85\textwidth]{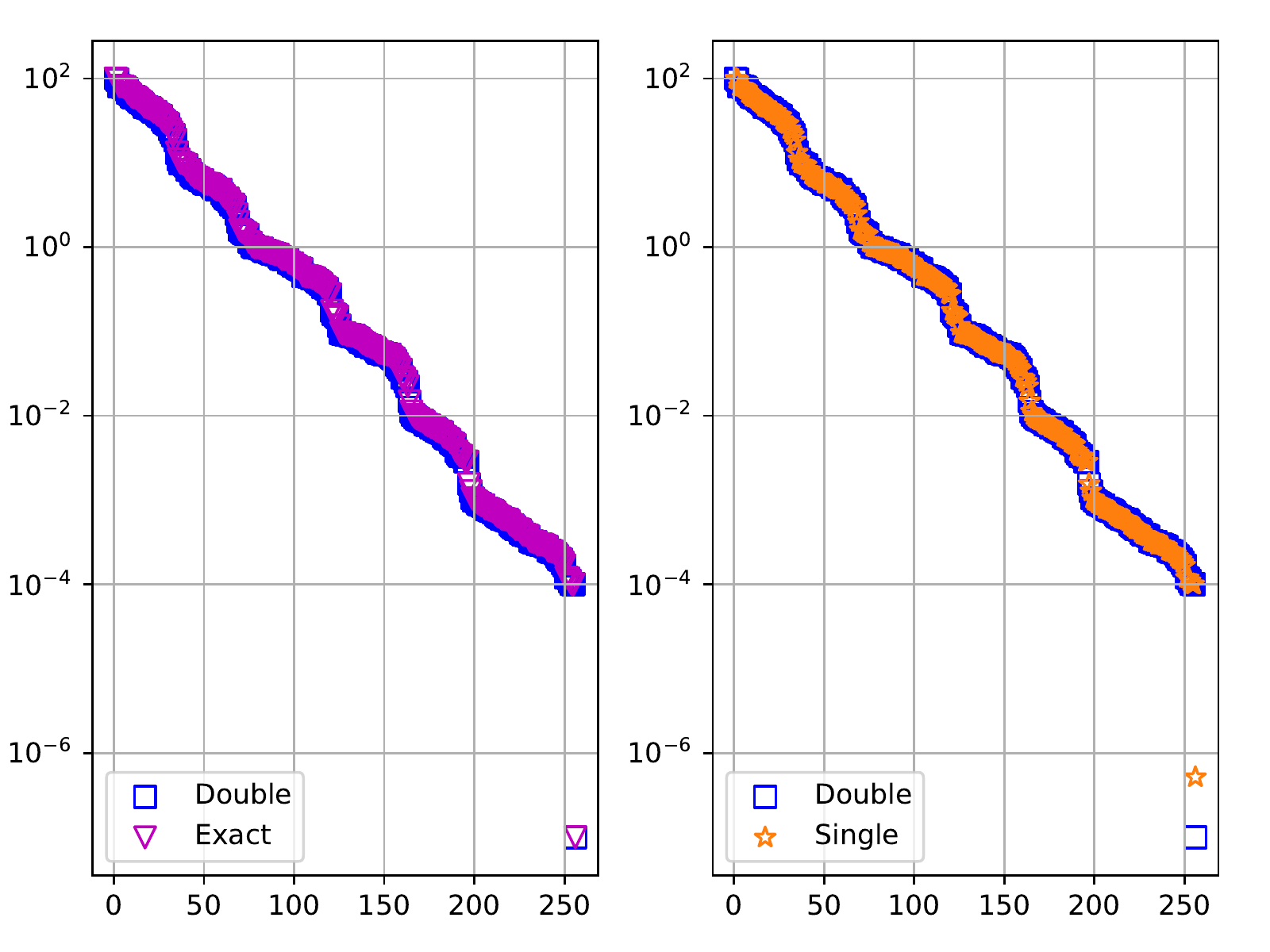}
\caption{
The matrix $\ma\in\real^{4096\times 256}$ has 255 distinct singular values 
in $[10^{-4}, 10^2]$, and a single small singular value $10^{-7}$.
All panels: Double precision singular values (\textcolor{blue}{squares}).
Left: Exact singular values (\textcolor{purple}{triangles}). 
Right: Single precision singular values (\textcolor{orange}{stars}). 
}
\label{fig4.1}    
\end{figure}

\begin{figure}[ht!]
\includegraphics[width=0.85\textwidth]{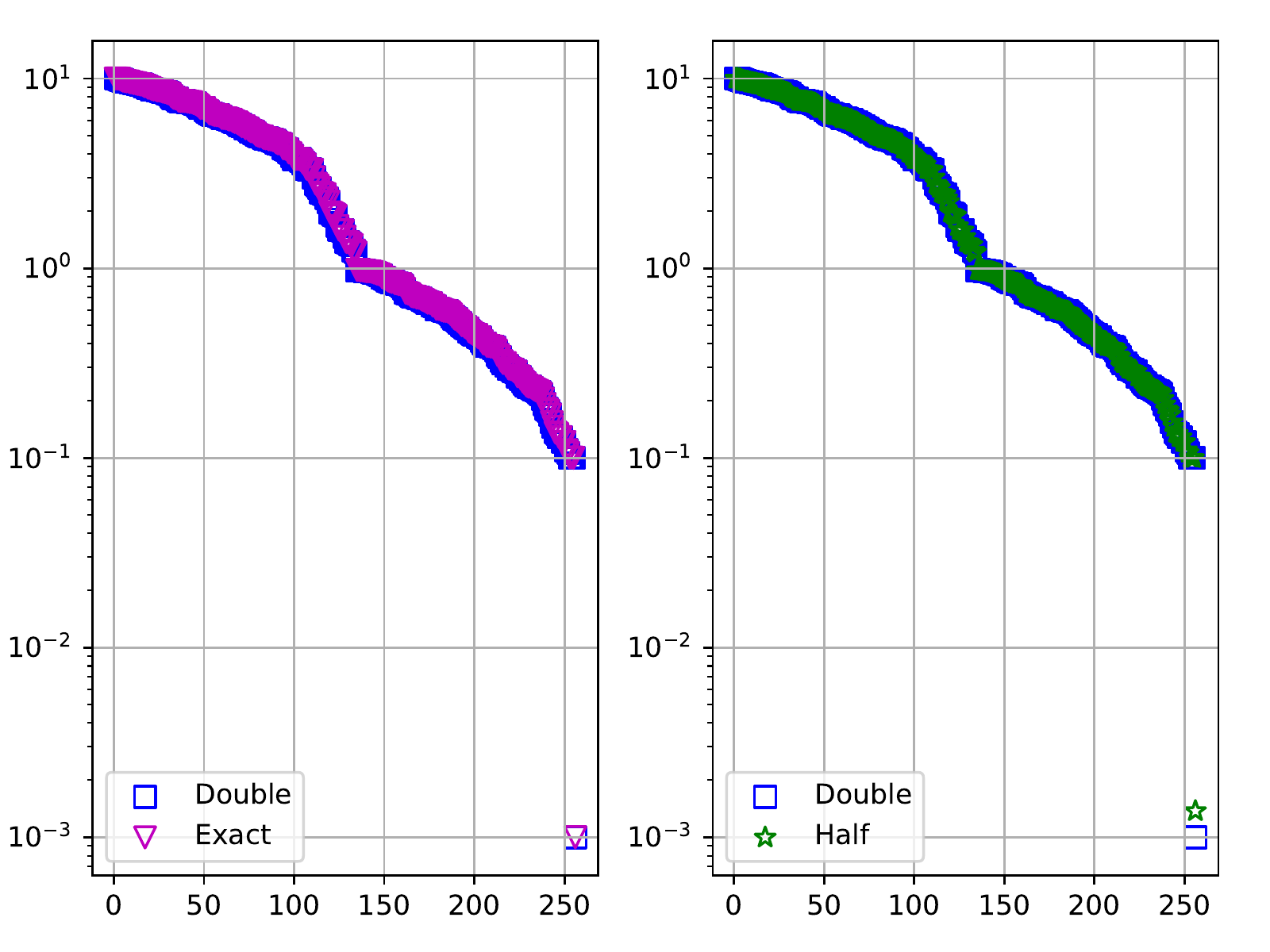}
\caption{
The matrix $\ma\in\real^{4096\times 256}$ has 255 distinct singular values 
in $[10^{-1}, 10^1]$, and a single small singular value $10^{-3}$.
All panels: Double precision singular values (\textcolor{blue}{squares}).
Left: Exact singular values (\textcolor{purple}{triangles}). 
Right: Half precision singular values (\textcolor{green}{stars}).
}\label{fig4.2}    
\end{figure}

\begin{figure}[ht!]
\includegraphics[width=0.85\textwidth]{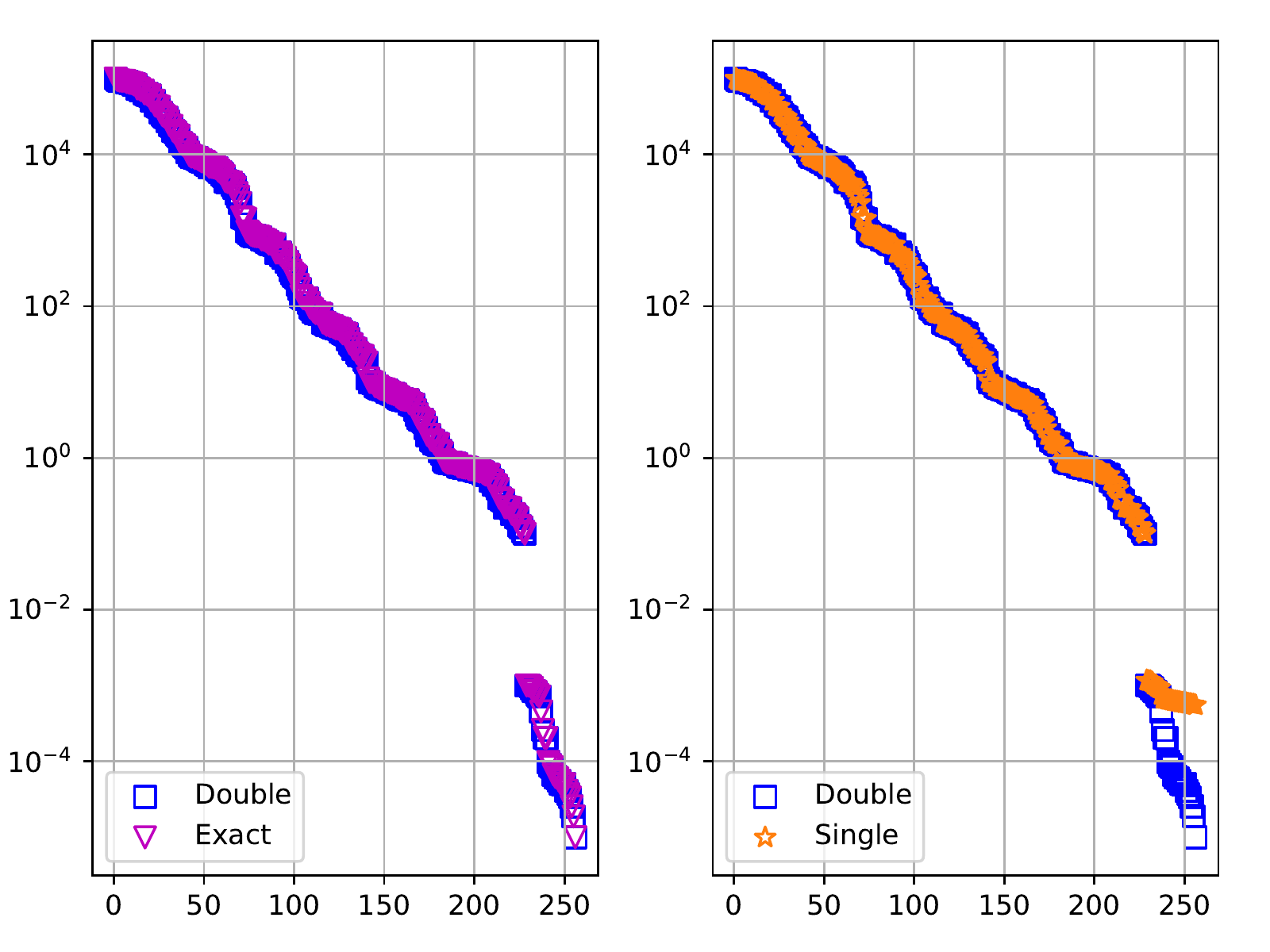}
\caption{The matrix $\ma\in\real^{4096\times 256}$ has 228 distinct singular values 
in $[10^{-1}, 10^5]$, and 28 distinct singular values in $[10^{-5}, 10^{-3}]$.
All panels: Double precision singular values (\textcolor{blue}{squares}).
Left: Exact singular values (\textcolor{purple}{triangles}). 
Right: Single precision singular values (\textcolor{orange}{stars}). 
}\label{fig4.3}    
\end{figure}

\begin{figure}[ht!]
\includegraphics[width=0.85\textwidth]{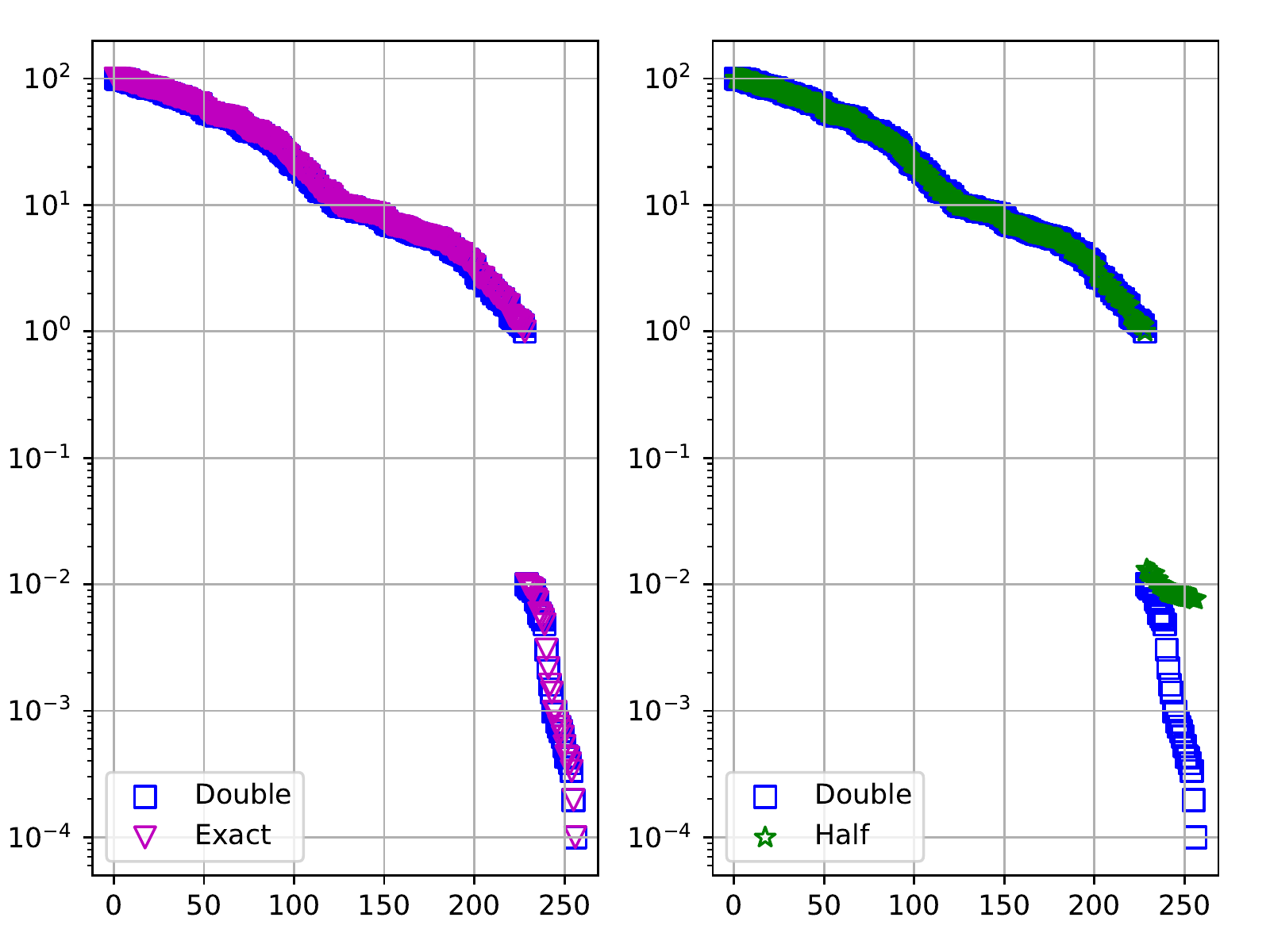}
\caption{
The matrix $\ma\in\real^{4096\times 256}$ has 228 distinct singular values 
in $[10^{0}, 10^2]$, and 28 distinct singular values in $[10^{-4}, 10^{-2}]$.
All panels: Double precision singular values (\textcolor{blue}{squares}).
Left: Exact singular values (\textcolor{purple}{triangles}). 
Right: Half precision singular values (\textcolor{green}{stars}).
}\label{fig4.4}    
\end{figure}

\section{Future Work}\label{s_future}
We investigated the change in the computed 
singular values of a full column-rank
matrix $\ma$ after it has been is downcast
to a lower arithmetic precision.
Our lower bounds 
in Theorem \ref{t_i1} represent a \textit{qualitative} model
for the increase in the 
smallest singular values of the perturbed matrix $\ma+\me$, which 
is confirmed by the experiments in section~\ref{s_exp}.

Future work will consist of a \textit{quantitative} analysis 
to determine the exact order of magnitude of the increase in the small singular 
values and the
structural properties of $\ma$ that can contribute to it, including
specifically 
the size of the gap that separates the small singular values from the larger singular values; and
the condition number of $\ma$ with respect to left inversion.

In addition, the influence of the third order perturbation terms needs to be investigated, as they might possibly become dominant for ill-conditioned matrices~$\ma$.


\appendix 

\section{Algorithms}\label{appen}
We present pseudo codes for two algorithms: The function 
\texttt{create\_sigmas} in Algorithm~\ref{al2} computes the singular values~$\msig$ according to the specifications in the input
parameters \texttt{params}.
Algorithm~\ref{al1} constructs $\ma$ from $\msig$ in double precision, 
and then computes the singular values 
$\msig^d$ of $\ma$, 
$\msig^s$ of $\mathtt{single}(\ma)$, and 
$\msig^h$ of $\mathtt{half}(\ma)$. If $d_1=0$ or $d_2=0$ in Algorithm~\ref{al2}, then the cluster $\msig_{1}$ or $\msig_{2}$ consists of a single singular value of multiplicity $k_{1}$ or $k_{2}$,
respectively.

\begin{algorithm}
\caption{Singular values of $\ma$, $\mathtt{single}(\ma)$ and $\mathtt{half}(\ma)$}
\label{al1}
\begin{algorithmic}
\REQUIRE Large matrix dimension~$m$, \texttt{params}
\ENSURE Singular values of $\ma$ in double, single, half precision 
\STATE{$\msig \gets  \mathtt{create\_sigmas}(\texttt{params})$} 
$\qquad$ \COMMENT{Exact singular values}
\smallskip
\STATE{$n\gets\mathtt{length}(\msig)$}
\qquad \COMMENT{Small dimension of $\ma$}
\smallskip
\STATE{$[\mU,\ms,\mv]\gets \mathtt{SVD}(\mathtt{randn}(m, n))$}
\qquad\COMMENT{Left and right singular vectors for $\ma$}
\smallskip
\STATE{$\ma\gets\mU\msig\mv^T$}
\qquad\COMMENT{Compute $\ma$ in double precision}
\smallskip
\STATE{$ \msig^{d} \gets \mathtt{SVD}(\ma)$}
$\qquad$\COMMENT{Singular values of double precision $\ma$}
\smallskip
\STATE{$ \msig^{s} \gets  \mathtt{SVD}(\mathtt{double}(\mathtt{single}(\ma))$}
$\qquad$\COMMENT{Singular values of single precision $\ma$}
\smallskip
\STATE{$ \msig^{h} \gets \mathtt{SVD}( \mathtt{double}(\mathtt{half}(\ma))$}
$\qquad$\COMMENT{Singular values of half precision $\ma$}
\smallskip
\RETURN $\msig, \, \msig^{d}, \, \msig^{s}, \, \msig^{h}$
\end{algorithmic}
\end{algorithm}

\begin{algorithm}
    \caption{Exact singular values: function \texttt{create\_sigmas}}
    \label{al2}
     \begin{algorithmic}
     \REQUIRE \texttt{params} $=\{s_{1}, g,k_{1},k_{2}, d_{1},d_{2}\}$
      \ENSURE Exact singular values $\msig$
      \smallskip
      \STATE{$\msig \gets \mathtt{zeros}(k_1+k_2,1)$}
      $\qquad$ \COMMENT{Initialize vector of all singular values}
      \smallskip
      \STATE{$\msig_{1} \gets \mathtt{zeros}(k_{1},1)$} $\qquad$\COMMENT{Initialize cluster of large singular values}
       \smallskip
      \STATE{$\msig_2 \gets \mathtt{zeros}(k_{2},1)$} $\qquad$\COMMENT{Initialize cluster of small singular values}
       \smallskip
      \STATE{$\msig_{1}(1) \gets 10^{s_{1}}$} $\qquad$\COMMENT{Largest singular value}
      \smallskip \smallskip
      \IF{$k_{1} > 1$}
        \STATE{$\msig_{1}(k_{1}) \gets 10^{s_{1} - d_{1}}$} $\qquad$\COMMENT{Smallest singular value in $\msig_{1}$}
      \ENDIF
      \smallskip
      \STATE{} \COMMENT{Uniform sampling of interior singular values in cluster $\msig_1$}
      \smallskip
      \FOR{$j=2:k_{1}-1$}{
        \STATE{$\msig_{1}(j) \gets \mathtt{Uniform([\msig_{1}(k_{1}),\msig_{1}(1)])}$} 
      }
      \ENDFOR
      \smallskip
      \STATE{$\msig_{2}(1) \gets 10^{s_{1} - d_{1} - g}$} $\qquad$\COMMENT{Largest singular value in $\msig_2$}
      \smallskip
      \IF{$k_{2} >1$}
        \STATE{$\msig_{2}(k_{2}) \gets 10^{s_{1} - d_{1} - g - d_{2}}$}  $\qquad$\COMMENT{Smallest singular value in $\msig_{2}$}
     \ENDIF
      \smallskip 
      \STATE{} \COMMENT{Uniform sampling of interior singular values  in cluster $\msig_2$}
      \smallskip
      \FOR{$j=2:k_{2}-1$}{
        \STATE{$\msig_{2}(j) \gets \mathtt{Uniform([\msig_{2}(k_{2}),\msig_{2}(1)])}$} 
      }
      \ENDFOR
      \smallskip
      \STATE{$\msig \gets [\msig_{1},\msig_{2}]$} $\qquad$\COMMENT{Concatenate the two singular value clusters}
      \smallskip
      \RETURN $\mathtt{sort}(\msig)$ $\qquad$\COMMENT{Return sorted singular values in non-ascending order}
      \end{algorithmic}
\end{algorithm}

\newpage
\section{Supplementary material} \label{appen_B}
We illustrate that downcasting to lower precision can the increase the set of the smallest singular values, even when the Assumptions~\ref{ass_1}, \ref{ass_2} are not satisfied. We present two additional plots, and specifically Figure~\ref{fig_supp_mat_1} shows the increase of the computed smallest singular values in single ($r=28$) and  Figure~\ref{fig_supp_mat_2} shows the increase of the computed smallest singular value in half ($r=1$). 

\begin{table}[ht!]
\centering
\begin{tabular}{|l|l|l|l|l|l|l|}
\hline
 &$\min(\msig)$ & $\min(\msig^{d})$  & $\min(\msig^{s})$ & $\texttt{avg}(\msig^{s}_2)$  & $\min(\msig^{h})$ & $\texttt{avg}(\msig^{h}_2)$  \\
  \hline
Fig. \ref{fig_supp_mat_1} & $10^{-7}$  & $10^{-7}$  & $5 \times 10^{-5}$ & $4 \times 10^{-5}$  & N/A & N/A  \\
 \hline
 Fig. \ref{fig_supp_mat_2} & $10^{-3}$  & $10^{-3}$  & N/A & N/A  & $6 \times 10^{-3}$  & $5 \times 10^{-3}$  \\
  \hline
\end{tabular}
\caption{Smallest singular values in Figures \ref{fig_supp_mat_1}--\ref{fig_supp_mat_2}: 
exact ($\msig$); double precision ($\msig^d$); single precision ($\msig^s$); 
and half precision ($\msig^h$). The quantities $\texttt{avg}(\msig^{s}_2)$ and $\texttt{avg}(\msig^{h}_2)$ represent the average increase of the $r$ smallest singular values for single and half precision.}
\label{su[[_mat_table}
\end{table}

\paragraph{Figure~\ref{fig_supp_mat_1}}
The cluster $\msig_1$ contains $228$ distinct singular values in the interval $[10^{-3}, 10^4]$, while $\msig_2$ contains the smallest $28$ singular values in the interval $[10^{-7},10^{-4}]$. The values in Assumption~\ref{ass_2} and (\ref{eqn:aspd}) are not satisfied since 
\begin{align*}
    &\underbrace{\sigma_{n-r}}_{10^{-4}}\  \lesssim\  \underbrace{\sigma_{\max}\, u}_{6 \times 10^{-2}}, \\ &\underbrace{4\,\sigma_{\max}\,u}_{2.4 \times 10^{-1}} \ >  
    \underbrace{\sigma_{n-r+1}}_{10^{-3}}.
\end{align*}
However, the smallest singular value of $\msig_{2}$ has increased to $5 \times 10^{-6}$, with an average increase of $4 \times 10^{-6}$ for the $r$ smallest singular values. 

\paragraph{Figure~\ref{fig_supp_mat_2}}
The cluster $\msig_1$ contains 255 distinct singular values
in the interval $[10^{-2}, 10^2]$, while $\msig_2$ contains the single singular value $10^{-3}$.
The values in Assumption~\ref{ass_1} and~(\ref{eqn:aspd}) are not satisfied since
\begin{align*}
    &\underbrace{\sigma_{n}}_{10^{-3}}\  \lesssim\  \underbrace{\sigma_{\max}\, u}_{5 \times 10^{-2}}, \\ &\underbrace{4\, \sigma_{\max}\, u}_{2 \times 10^{-1}} \ > \underbrace{\sigma_{n-1}}_{10^{-2}}.
\end{align*}
However, in half precision, the smallest singular value has increased
to $6\times 10^{-3}$.

\begin{figure}[ht!]
\includegraphics[width=0.85\textwidth]{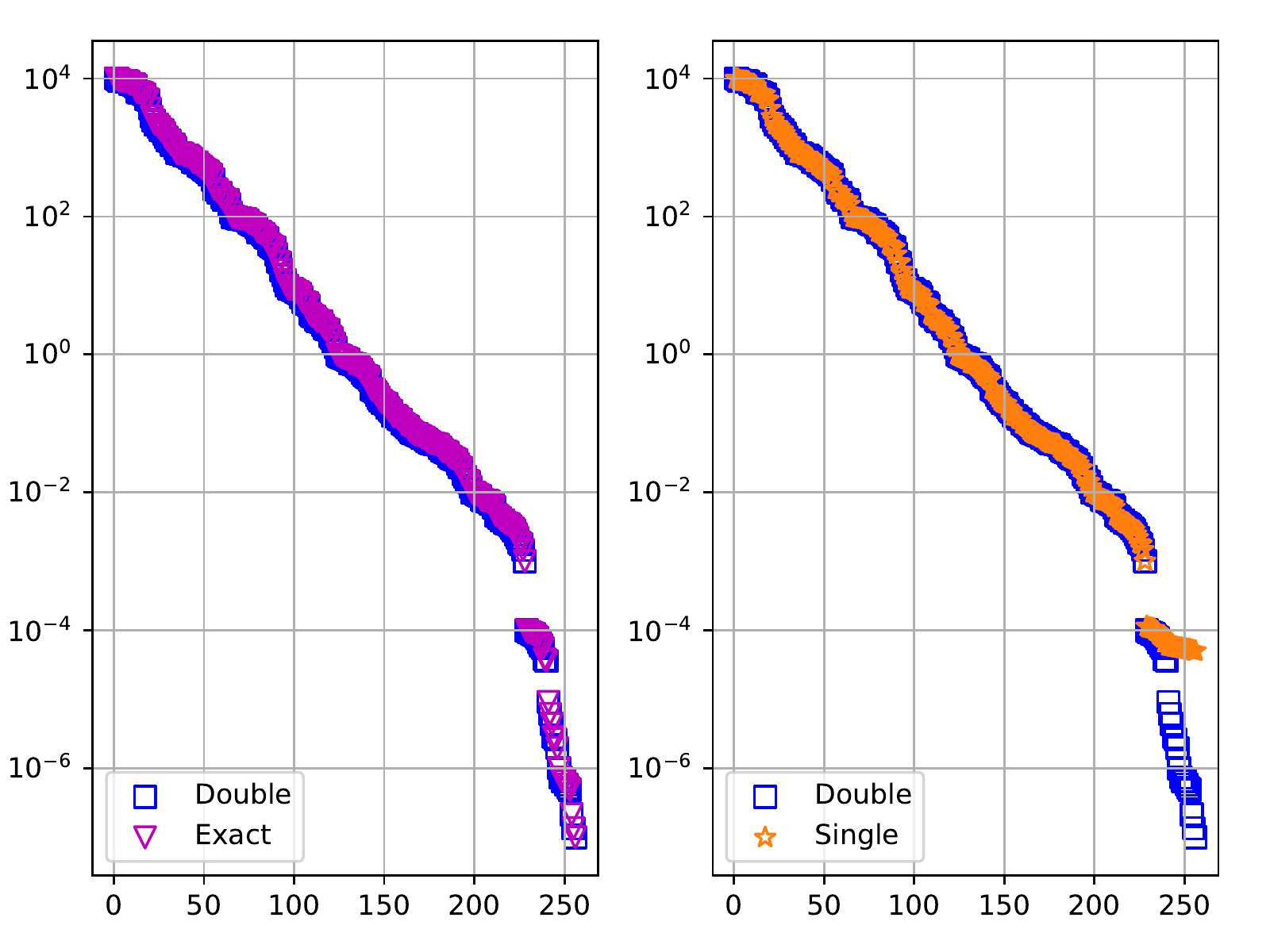}
\caption{The matrix $\ma\in\real^{4096\times 256}$ has 228 distinct singular values 
in $[10^{-3}, 10^4]$, and 28 distinct singular values in $[10^{-7}, 10^{-4}]$.
All panels: Double precision singular values (\textcolor{blue}{squares}).
Left: Exact singular values (\textcolor{purple}{triangles}). 
Right: Single precision singular values (\textcolor{orange}{stars}). 
}\label{fig_supp_mat_1}    
\end{figure}

\begin{figure}[ht!]
\includegraphics[width=0.85\textwidth]{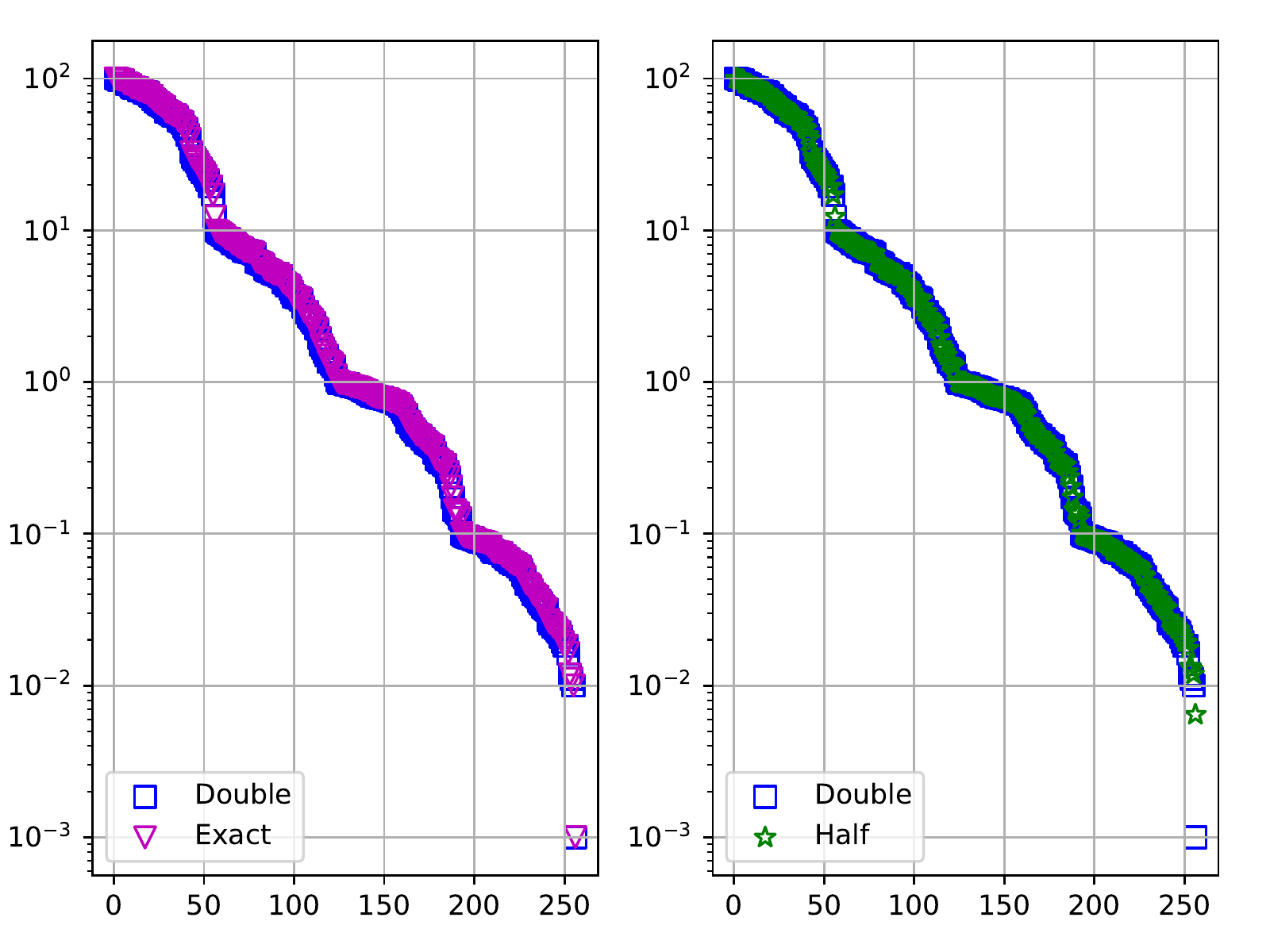}
\caption{The matrix $\ma\in\real^{4096\times 256}$ has 228 distinct singular values 
in $[10^{-2}, 10^2]$, and a single small singular value $10^{-3}$.
All panels: Double precision singular values (\textcolor{blue}{squares}).
Left: Exact singular values (\textcolor{purple}{triangles}). 
Right: Half precision singular values (\textcolor{green}{stars}). 
}\label{fig_supp_mat_2}    
\end{figure}

\printbibliography

\end{document}